\pgfplotsset{compat=newest}
\let\blx@rerun@biber\relax
\theoremstyle{plain}
	\newtheorem{theorem}{Theorem}
	\newtheorem{lemma}[theorem]{Lemma}
\theoremstyle{definition}
	\newtheorem{definition}[theorem]{Definition}
	\newtheorem{remark}[theorem]{Remark}
\renewcommand{\vec}[1]{{\mathbf{#1}}}
\newcommand{\jump}[1]{[\![{#1}]\!]}
\newcommand{\avg}[1]{\{\!\!\{{#1}\}\!\!\} }
\newcommand{\dist}{{\operatorname{dist}}}
\newcommand{\R}{{\mathbb R}}
\newcommand{\N}{{\mathbb N}}
\renewcommand{\P}{{\mathbb P}}
\newcommand{\rti}{{I_h^{\text{RT}}}}
\newcommand{\bdmih}{{I_h^{\text{BDM}}}}
\newcommand{\Rti}{{\mathbf{I}_h^{\text{RT}}}}
\newcommand{\hdivi}{{I_h^{\vec{H}(\operatorname{div})}}}
\newcommand{\cri}{{I_h^{\text{CR}}}}
\newcommand{\li}{{I_h^{\text{L}}}}
\newcommand{\fori}{{I_h^{\text{F}}}}
\newcommand{\Faces}{{\mathcal{F}(\mathcal{T}_h)}}    
\newcommand{\innerFaces}{{\mathcal{F}^i(\mathcal{T}_h)}}  
\newcommand{\boundaryFaces}{{\mathcal{F}^b(\mathcal{T}_h)}} 
\newcommand{\Hdiv}{{\vec{H}(\operatorname{div},\Omega)}}
\newcommand{\BDM}{{\text{BDM}(\mathcal{T}_h)}}
\newcommand{\RT}{{\text{RT}(\mathcal{T}_h)}}
\newcommand{\MAC}{{\mathit{MAC}}}
\newcommand{\RVP}{{\mathit{RVP}}}
\newcommand{\abssec}[1]{\noindent\small {\bfseries #1\quad}\ignorespaces}
\renewenvironment{abstract}{\abssec{Abstract}}{\par\vspace{1em}}
\newenvironment{keywords}{\abssec{Keywords}}{\par\vspace{1em}}
\newenvironment{MSC}{\abssec{Mathematics Subject Classification (2010)}}{\par\vspace{1em}}
\title{A nonconforming pressure-robust finite element method for the Stokes equations on anisotropic meshes}
\date{\today}
\author{Thomas Apel \and Volker Kempf \and Alexander Linke \and Christian Merdon}
\begin{document}
\maketitle

\begin{abstract}
	Most classical finite element schemes for the (Navier--)Stokes equations are neither pressure-robust, nor are they inf-sup stable on general anisotropic triangulations. 
	A lack of pressure-robustness may lead to large velocity errors, whenever the Stokes momentum balance is dominated by a strong and complicated pressure gradient. 
	It is a consequence of a method, which does not exactly satisfy the divergence constraint. 
	However, inf-sup stable schemes can often be made pressure-robust just by a recent, modified discretization of the exterior forcing term, using $\vec{H}(\operatorname{div})$-conforming velocity reconstruction operators. 
	This approach has so far only been analyzed on shape-regular triangulations. 
	The novelty of the present contribution is that the reconstruction approach for the Crouzeix--Raviart method, which has a stable Fortin operator on arbitrary meshes, is combined with results on the interpolation error on anisotropic elements for reconstruction operators of Raviart--Thomas and Brezzi--Douglas--Marini type, generalizing the method to a large class of anisotropic triangulations. 
	Numerical examples confirm the theoretical results in a 2D and a 3D test case.	
\end{abstract}
\begin{keywords}	
	anisotropic finite elements, incompressible Navier-Stokes equations, divergence-free methods, pressure-robustness
\end{keywords}
\begin{MSC} 
	65N30, 65N15, 65D05    
\end{MSC}

\section{Introduction}
Classical finite element methods for the incompressible Navier--Stokes equations, e.g. the Taylor--Hood family of finite elements, typically do not yield exactly divergence free solutions in the sense of $\vec{H}(\operatorname{div})$, but instead relax the divergence constraint in order to achieve discrete inf-sup stability \cite{LedererLinkeMerdonSchoberl2017}. 
The resulting error estimates for $\vec{H}^1$-conforming methods for the Stokes equations
\begin{align*}
	-\nu \Delta \vec{u} + \nabla p &= \vec{f}, \\
	\nabla \cdot \vec{u} &= 0,
\end{align*}
are of the form, see e.g. \cite{JohnLinkeMerdonNeilanRebholz2017,GiraultRaviart1986},
\begin{equation}
	\norm{\vec{u}-\vec{u}_h}_{1} \leq 2(1+C_F) \inf_{\vec{v}_h\in\vec{X}_h} \norm{\vec{u}-\vec{v}_h}_{1} + \frac{1}{\nu} \inf_{q_h\in Q_h} \norm{p - q_h}_{0},
\end{equation}
i.e. the quality of the velocity estimate depends on the pressure and possibly deteriorates unboundedly for $\nu \to 0$ posing a classical locking phenomenon in the sense of Babu\v{s}ka and Suri \cite{BabuskaSuri1992}.
We remark that the constant $C_F$ in the estimate denotes the stability constant of the Fortin operator of the mixed method.
On the other hand, exactly divergence-free $\vec{H}^1$ or $\vec{H}(\operatorname{div})$ conforming methods of order $k$, see e.g. \cite{Nedelec1980,Nedelec1986,BrezziDouglasMarini1985,RaviartThomas1977,ScottVogelius1985,SchroederLehrenfeldLinkeLube2018}, produce error estimates of the type
\begin{equation}
	\norm{\vec{u}-\vec{u}_h}_{1,h} \leq C_F \inf_{\vec{v}_h\in\vec{X}_h} \norm{\vec{u}-\vec{v}_h}_{1,h} + C h^k\abs{\vec{u}}_{k+1},
\end{equation}
which provide a much better control on the velocity error, independent of the pressure approximability. 

These methods have been known since the 1980s, see e.g. \cite{Nedelec1980,Nedelec1986,ScottVogelius1985}, and have significant advantages, especially in settings where the viscosity parameter $\nu$ is small or where the pressure approximation in the discrete pressure space is of low order. 
However they were not in the focus for practical applications where incompressible flows needed to be computed on a large scale, which was mainly due to two reasons: their more complicated implementation compared to the classical methods and their higher computational cost. 
Both issues are being addressed in current research: highly automated finite element libraries like NGSolve \cite{Schoberl2014} and FEniCS \cite{LoggMardalWells2012} offer a large choice of available elements, and the computational cost can be decreased significantly, e.g. by hybridization, see \cite[Appendix]{SchroederLehrenfeldLinkeLube2018}. 

Another way to get to a pressure-robust discretization has been introduced recently, see \cite{Linke2014}. It uses a reconstruction operator for the velocity test functions to reestablish $L^2$ orthogonality between the test functions and the irrotational part of the Helmholtz decomposition of the external force in the Stokes case, which results in regaining pressure-robustness for standard methods. The approach was first used on the Crouzeix--Raviart element, but has been applied to several other classical elements, see \cite{LinkeMatthiesTobiska2016,LinkeMerdon2016,LinkeMerdonWollner2017}. 

Unfortunately, all these results have in common that they assume a shape-regular triangulation of the domain. This assumption is in general not valid in practical applications, as incompressible flows tend to form boundary and interior layers, and in these regions adaptive mesh refinement strategies lead to highly stretched elements. 

However, there are a couple of established finite element methods, where a uniform stability of the Fortin operator has been shown on anisotropic mesh families. 
By classical mixed theory, this leads to a uniform inf-sup stability in the anisotropic case as well, which is needed for the discrete pressure error estimates. 
The most remarkable example is the nonconforming Crouzeix--Raviart element \cite{ApelNicaiseSchoberl2001,ApelNicaiseSchoberl2001:2}, where the stability constant of its Fortin operator is $C_F = 1$ on \emph{arbitrary} simplex grids, including anisotropic elements, evidently. 
See \Cref{lem:CRFortin} for the detailed result.
Further elements, which have shown to be applicable on anisotropic grids comprise the Bernardi--Raugel element in 2D and related elements \cite{ApelNicaise2004} and nonconforming rectangular elements \cite{ApelMatthies2008}, all combined with discontinuous pressure approximations. 
There are also results for the $hp$-version finite element method \cite{AinsworthCoggins2000,AinsworthCoggins2002,SchotzauSchwabStenberg1999} and recently, for certain anisotropic triangulations, the lowest order Taylor--Hood elements \cite{BarrenecheaWachtel2019}. 
As mentioned before, these discretizations are not pressure-robust.

We address the question of uniformly stable, pressure-robust methods for anisotropic grids in the present contribution, by combining the approach for pressure-robustness from \cite{Linke2014,BrenneckeLinkeMerdonSchoberl2015} and the error estimates for Raviart--Thomas and Brezzi--Douglas--Marini interpolation from \cite{AcostaApelDuranLombardi2011,ApelKempf2019}. 
In particular, we focus on two relaxations of the usual minimum angle condition on the shape of the elements. 
We consider triangles and tetrahedra which satisfy a maximum angle condition or additionally a regular vertex condition. 
The maximum angle condition was first introduced in \cite{Synge1957} for triangles and generalized for tetrahedra in \cite{Krizek1992}, and is frequently used, see e.g. \cite{AcostaApelDuranLombardi2011,AcostaDuran1999,Apel1999,DuranLombardi2008}. 
It is satisfied if all angles inside an element are bounded away from $\pi$. The regular vertex condition on the other hand is satisfied, when there is a vertex for which the outgoing unit vectors along the edges are uniformly linearly independent. 
We give proper definitions in \Cref{sec:preliminaries}. 

In two dimensions, the regular vertex property is trivially satisfied if the maximum angle condition is met, but the conditions are not equivalent in three dimensions. 
This becomes relevant for anisotropic meshes that arise when handling singularities near concave edges of the domain, see e.g. \cite{ApelNicaiseSchoberl2001} and the arguments in \Cref{subsec:aniso_meshes}.

For triangulations of both types, we prove optimal error estimates for convex domains and full elliptic regularity and show numerical experiments which support the theoretical results. The main result of this article is the generalization of the results from \cite{BrenneckeLinkeMerdonSchoberl2015,Linke2014} to a more general class of meshes, which requires only the maximum angle condition, and some sharper estimates under the assumption of a regular vertex property, allowing the method to be used on more application-oriented meshes.

In \Cref{sec:preliminaries} we introduce the required notation, some aspects of anisotropic triangulations and the continuous and discrete setting of the Stokes equations.
In \Cref{sec:CR_results} we recall some properties of the Crouzeix--Raviart element, which make it favorable to use for anisotropic settings.
\Cref{sec:a_priori_errors} contains the main results, the a-priori error estimates for the Stokes problem without the constraint of shape-regular triangulations.
The numerical examples are presented in \Cref{sec:numerical_examples}.

\section{Preliminaries}\label{sec:preliminaries}
\subsection{Notation}
Throughout the text we use bold symbols for vectors, vector-valued functions and their function spaces. 
The symbol $C$ denotes a generic constant which may change from line to line. When writing volume and surface integrals, we usually omit the integration measure, where the meaning is clear. 

By $\mathcal{T}_h$ we denote a conforming simplicial triangulation of the considered domain $\Omega \subset \R^d$, where $d\in \{2,3\}$ is the space dimension. 
The global mesh size parameter is defined by 
\begin{equation*}
	h = \max_{T\in\mathcal{T}_h} h_T,
\end{equation*} 
where $h_T$ is the diameter of the element $T\in \mathcal{T}_h$. 
By $\Faces$ we denote the set of all simplex facets of the triangulation $\mathcal{T}_h$, i.e. depending on $d$ the edges of triangles or faces of tetrahedra, and by $\innerFaces$ the set of all interior facets. 
For an element $T\in\mathcal{T}_h$, $\mathcal{F}(T) \subset \Faces$ is the set of all facets of $T$. 
For an element $T\in\mathcal{T}_h$ and a facet $F\in \Faces$ we denote by $\vec{x}_T$ and $\vec{x}_F$ their barycenters, respectively. 
For any facet $F\in\Faces$ let $\vec{n}_F$ denote its unit normal vector, which is oriented outward for boundary facets $F\in\boundaryFaces=\Faces\setminus\innerFaces$ and has an arbitrary but fixed orientation for interior facets. 
When considering a facet $F\in\mathcal{F}(T)$ of an element $T\in \mathcal{T}_h$, $\vec{n}_{F_T}$ denotes the outward facing normal vector with respect to the element. 
The aspect ratio $\sigma_T$ of an element $T\in\mathcal{T}_h$ is defined as
\begin{equation*}
	\sigma_T = \frac{h_T}{\rho_T},
\end{equation*}
where $\rho_T$ is the supremum of the diameters of all spheres contained in $T$. We denote by $\sigma$ the maximum of the occurring aspect ratios in the triangulation.

\subsection{Anisotropic meshes}\label{subsec:aniso_meshes}
When dealing with the relaxed notion of anisotropic triangulations, i.e. do not set an upper bound for the triangulation's aspect ratio, still some regularity is required of the elements. We define two such conditions.
\begin{definition}\label{def:maximum_angle_condition}
	An element $T$ satisfies the \emph{maximum angle condition} with a constant $\bar{\phi} < \pi$, written as $\MAC(\bar{\phi})$, if the maximum angle between facets and, for $d=3$, the maximum angle inside the facets are less than or equal to $\bar{\phi}$. A triangulation satisfies $\MAC(\bar{\phi})$, if all elements do.
\end{definition}
The maximum angle condition for triangles was first used in \cite{Synge1957}, and generalized to tetrahedra in \cite{Krizek1992}. It is very common when dealing with anisotropic elements, see e.g. \cite{AcostaApelDuranLombardi2011,AcostaDuran1999,Apel1999,DuranLombardi2008}. The next property is equivalent to the maximum angle condition for $d=2$, see \cite[Section 5, p. 29]{AcostaDuran1999}, while in three dimensions it describes a proper subclass. 
\begin{definition}\label{def:regular_vertex_property}
	An element $T$ satisfies the \emph{regular vertex property} with a constant $\bar{c}$, written as $\RVP(\bar{c})$, if there is a vertex $\vec{p}_{T,k}$ of $T$, so that for the matrix $N_k$, made up of the unit column vectors $\vec{l}_{T,j}^k=\frac{\vec{p}_{T,j}-\vec{p}_{T,k}}{\norm{\vec{p}_{T,j}-\vec{p}_{T,k}}}$ outgoing from vertex $\vec{p}_{T,k}$ towards vertex $\vec{p}_{T,j}$, $j\in \{1,\ldots,d+1\}\setminus\{k\}$, the inequality 
	\begin{equation*}
		|\det N_k| \geq \bar{c} > 0
	\end{equation*}
	holds. The vertex $\vec{p}_{T,k}$ is then called \emph{regular vertex} of the element $T$. Without loss of generality for the rest of the text we assume that the vertices are numbered so that $\vec{p}_{T,d+1}$ is the regular vertex, so that we can use the more intuitive notation $\vec{l}_{T,i} = \vec{l}_{T,i}^{d+1}$ and the element size parameters $h_{T,i}$, $i\in \{1,\ldots,d\}$, which are defined as the lengths of the edges corresponding to the vectors $\vec{l}_{T,i}$. 
\end{definition}

\begin{figure}[t]
	\centering
	\includegraphics[scale=1]{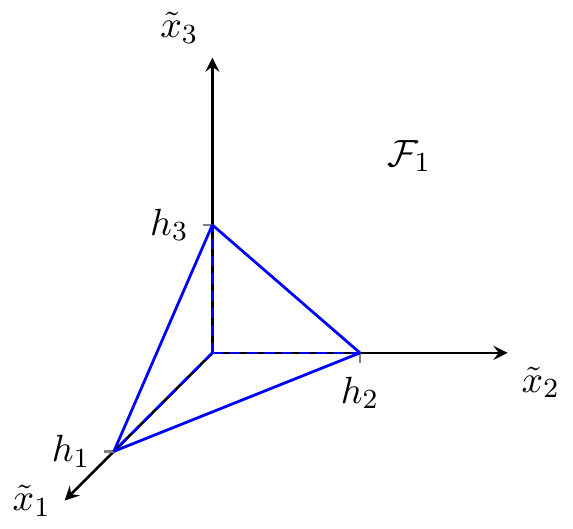}
	\includegraphics[scale=1]{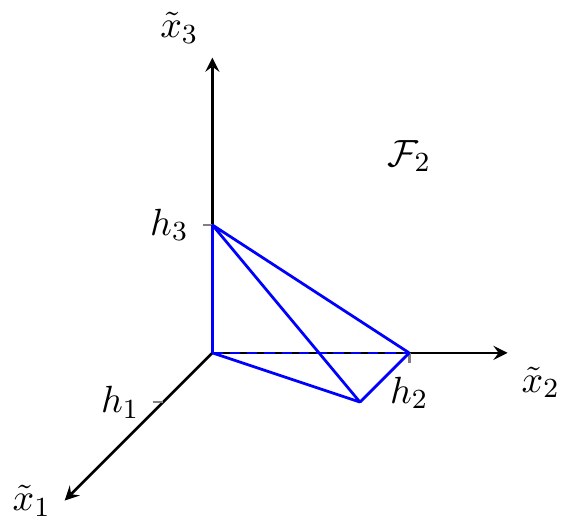}
	\caption{Families $\mathcal{F}_1$ and $\mathcal{F}_2$ of tetrahedra satisfying the maximum angle condition (left and right), and the regular vertex condition (left), Figure from \cite{ApelKempf2019}}\label{fig:Reference_families}
\end{figure}
As proved in \cite[Theorem 2.2, Theorem 2.3]{AcostaApelDuranLombardi2011}, the families $\mathcal{F}_1$ and $\mathcal{F}_2$ of elements pictured in \Cref{fig:Reference_families}, with arbitrary size parameters $h_i$, are sufficient to get any tetrahedron satisfying $\RVP(\bar{c})$ or $\MAC(\bar{\phi})$, using $\mathcal{F}_1$ or $\mathcal{F}_1 \cup \mathcal{F}_2$ respectively, by a reasonable affine transformation $F$, i.e. $F({\tilde{\vec{x}}}) = J_T  \tilde{\vec{x}} + \vec{x}_0$, $J_T\in \R^{d\times d}$, where $\norm{J_T}_{\infty}, \norm{J_T^{-1}}_{\infty} \leq C$, with $C$ only dependent on $\bar{\phi}$ resp. $\bar{c}$.

As described in \cite{ApelLube1995}, depending on the type of anisotropy in the elements, it may not be possible to fill arbitrary volumes with tetrahedra satisfying the regular vertex property, and the second type of reference family needs to be considered. 
Observe for example the three tetrahedra resulting from the subdivision of a triangular prism, as seen in  \Cref{fig:Pentahedron}. 
Two of those, $\vec{p}_1\vec{p}_2\vec{p}_3\vec{p}_6$ and $\vec{p}_1\vec{p}_4\vec{p}_5\vec{p}_6$ in the figure, clearly satisfy the regular vertex property, independent of the anisotropy of the prism. 
Now suppose the prism is stretched in $x_3$ direction, i.e. $h_3 \gg h_1,h_2$, as might be the case when grading the mesh towards a singular edge, see e.g. \cite{ApelNicaiseSchoberl2001,ApelNicaiseSchoberl2001:2}, then the remaining tetrahedron $\vec{p}_1\vec{p}_2\vec{p}_5\vec{p}_6$ does not satisfy the regular vertex property. 
If on the other hand we grade the mesh towards a boundary layer, e.g. $h_1\sim h_2 \gg h_3$, the third tetrahedron has a flat instead of a long shape, and the property is satisfied.

\begin{figure}[t]
	\centering
	\includegraphics[scale=1]{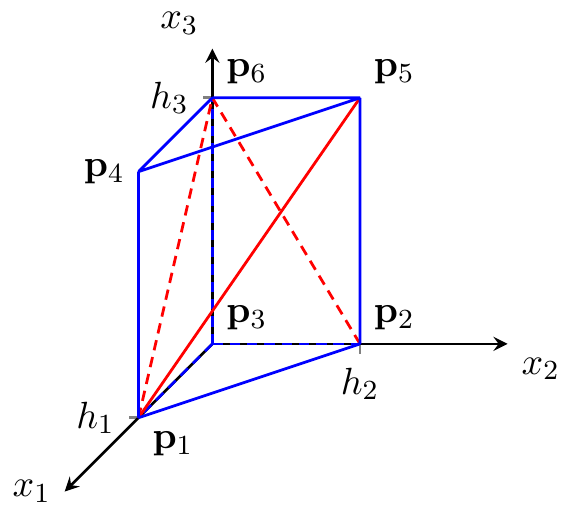}
	\caption{Subdivision of triangular prism in three tetrahedra}\label{fig:Pentahedron}
\end{figure}

For the rest of the text, except where explicitly stated, we assume that all triangulations at least satisfy a maximum angle condition.

\subsection{The continuous setting}
We consider the steady state incompressible Stokes equations in a simply connected, polyhedral domain $\Omega\subset\R^d$, $d\in\{2,3\}$, with homogeneous Dirichlet boundary conditions and external forcing $\vec{f}\in \vec{L}^2(\Omega)$ in the form
\begin{equation}\label{eq:stokescont}
\begin{split}
	-\nu \Delta\vec{u} + \nabla p &= \vec{f} \quad \text{on } \Omega,	\\
	\nabla \cdot \vec{u} &= 0 \quad \text{on } \Omega,	\\
	\vec{u} &= 0 \quad \text{on } \partial \Omega.
\end{split}
\end{equation}
Employing the function spaces
\begin{align*}
	&\vec{X} = \vec{H}^1_0(\Omega) = \{\vec{v} \in \vec{H}^1(\Omega): \vec{v} = 0 \text{ on } \partial \Omega\},	\\
	&Q = L^2_0(\Omega),
\end{align*}
the weak formulation of the problem is given as, see \cite[Section I.5.1]{GiraultRaviart1986}: find $(\vec{u},p) \in \vec{X}\times Q$, so that
\begin{equation}\label{eq:StokesContWeak}
\begin{split}
	a(\vec{u},\vec{v}) + b(\vec{v},p) &= l(\vec{v}),	\\
	b(\vec{u},q) &= 0,
\end{split}
\end{equation}
holds for all $(\vec{v},q)\in\vec{X}\times Q$, where the bilinear and linear forms are defined by
\begin{alignat*}{2}
	&a:\vec{X}\times\vec{X}\rightarrow \R,&&\quad a(\vec{u},\vec{v}) = \nu\int_\Omega \nabla\vec{u}:\nabla\vec{v},	\\
	&b:\vec{X}\times Q \rightarrow \R,&&\quad b(\vec{v},q) = -\int_\Omega q\nabla \cdot \vec{v}, \\
	&l:\vec{X} \rightarrow \R,&&\quad l(\vec{v}) = \int_\Omega \vec{f}\cdot\vec{v}. %
\end{alignat*}

With the space $\vec{V}^0 = \{\vec{v}\in \vec{X} : \nabla\cdot \vec{v} = 0\}$ of functions satisfying the divergence constraint, we can reformulate the problem in the elliptic form, see \cite[Section I.5.1]{GiraultRaviart1986}: find $\vec{u}\in \vec{V}^0$, so that
\begin{equation} \label{eq:StokesEllipticContinuous}
	a(\vec{u},\vec{v}) = l(\vec{v})
\end{equation}
holds for all $\vec{v}\in\vec{V}^0$. For the Stokes problem the continuous inf-sup condition
\begin{equation*}
	\exists \beta > 0: \quad \inf_{q\in Q}\sup_{\vec{v}\in\vec{X}} \frac{b(\vec{v},q)}{\norm{\vec{v}}_\vec{X} \norm{q}_0} \geq \beta
\end{equation*}
holds, see \cite[Section I.5.1]{GiraultRaviart1986}, where $\norm{\cdot}_k$ denotes the norm of the Sobolev space $H^k(\Omega)$ for $k\geq 0$.

\subsection{The discrete setting and interpolation operators}
For our method, we need some tools from the discontinuous Galerkin framework. We denote by $\jump{\vec{v}}_F$ and $\avg{\vec{v}}_F$ the jump and average, respectively, of a piecewise $\vec{H}^1$ function $\vec{v}$ over a facet $F$, which, see e.g. \cite[Section 1.2.3]{DiPietroErn2012}, are defined for an interior facet $F$ belonging to two elements $T_1$ and $T_2$ by
\begin{align*}
	\jump{\vec{v}}_F(\vec{x}) &= \vec{v}|_{T_1}(\vec{x}) - \vec{v}|_{T_2}(\vec{x}), \\
	\avg{\vec{v}}_F (\vec{x}) &= \frac{1}{2}(\vec{v}|_{T_1}(\vec{x}) + \vec{v}|_{T_2}(\vec{x})).
\end{align*}
For boundary faces we use the convention $\jump{\vec{v}}_F = \avg{\vec{v}}_F = \vec{v}$. For the velocity approximation we use the non-conforming Crouzeix--Raviart element, that was introduced in \cite{CrouzeixRaviart1973}, and is defined by
\begin{equation*}
	\vec{X}_h = \{\vec{v}_h\in \vec{L}^2(\Omega) : \vec{v}_{h}|_T \in \vec{P}_1 \text{ for all } T\in\mathcal{T}_h, \jump{\vec{v}_h}_F(\vec{x}_F) = 0 \text{ for all } F \in \Faces\}.
\end{equation*}
The corresponding pressure approximation uses piecewise constants from the space
\begin{equation*}
	Q_h = \{q_h\in Q : q_{h}|_T \in P_0 \text{ for all } T \in \mathcal{T}_h\},
\end{equation*}
where $P_k$ denotes the space of all polynomials with maximal degree $k$.

Using the space $\Hdiv = \{ \vec{v} \in \vec{L}^2(\Omega) : \nabla\cdot \vec{v} \in L^2(\Omega) \}$ we define the Brezzi--Douglas--Marini and Raviart--Thomas functions of lowest order by
\begin{align*}
	\BDM &= \{ \vec{v}_h \in \Hdiv: \vec{v}_h|_T \in \vec{P}_1\, \forall T\in\mathcal{T}_h, \jump{\vec{v}_h\cdot \vec{n}_F}_F = 0 \, \forall F\in\Faces\}, \\
	\RT &= \{ \vec{v}_h\in \Hdiv: \vec{v}_{h}|_T = \vec{a}_T + b_T(\vec{x}-\vec{x}_T) \, \forall T\in \mathcal{T}_h, \vec{a}_T\in\R^d, b_T \in \R, \\
	&\hphantom{= \{} \jump{\vec{v}_h \cdot \vec{n}_F} = 0\, \forall F\in\Faces \}.
\end{align*}
The Raviart--Thomas function space $\RT$ contains those Brezzi--Douglas--Marini functions from $\BDM$, which have constant normal components on all faces. These normal components define the Raviart--Thomas functions uniquely.

The Crouzeix--Raviart element is not $\vec{H}^1(\Omega)$ conforming, so the standard definitions of the gradient $\nabla$ and divergence $\nabla\cdot$ operators do not make sense for functions in $\vec{X}_h$. Instead we use the notions of the broken gradient $\nabla_h:\vec{X}\oplus\vec{X}_h\rightarrow L^2(\Omega)^{d\times d}$ and broken divergence $\nabla_h\cdot(\cdot) : \vec{X}\oplus\vec{X}_h\rightarrow L^2(\Omega)$, which define the derivatives elementwise for all $T\in\mathcal{T}_h$ by
\begin{equation*}
	(\nabla_h\vec{v}_h)|_T = \nabla(\vec{v}_h|_T),	\quad \text{and} \quad (\nabla_h\cdot \vec{v}_h)|_T = \nabla\cdot(\vec{v}_h|_T),
\end{equation*}
and which are on $\vec{X}$ equivalent to the standard operators, see e.g. \cite[Sections 1.2.5, 1.2.6]{DiPietroErn2012}. 
The discrete gradient norm for the space $\vec{X}\oplus\vec{X}_h$ is defined by
\begin{equation*}
	\norm{\vec{v}_h}_{1,h} = \left(\int_\Omega \nabla_h\vec{v}_h : \nabla_h\vec{v}_h\right)^{\nicefrac{1}{2}} = \norm{\nabla_h \vec{v}_h}_0.
\end{equation*}

We define the three interpolation operators $\cri:\vec{X} \rightarrow \vec{X}_h$, $\rti:\vec{X}\oplus\vec{X}_h\rightarrow \RT$ and $\bdmih:\vec{X}\oplus\vec{X}_h\rightarrow \BDM$ for the Crouzeix--Raviart, Raviart--Thomas and Brezzi--Douglas--Marini interpolation by
\begin{align*}
	\cri \vec{v} (\vec{x}_F) &= 	\frac{1}{\abs{F}}\int_F \vec{v}, \text{ for all } F\in \Faces, \\
	\vec{n}_F \cdot \rti \vec{v} (\vec{x}_F) &= \frac{1}{\abs{F}}\int_F \vec{v} \cdot \vec{n}_F , \text{ for all } F \in \Faces, \\
	\int_F (\bdmih\vec{v})\cdot \vec{n}_F p_h &= 	\begin{cases}
													\int_F \avg{\vec{v}\cdot \vec{n}_F} p_h, &\text{for all } F \in \innerFaces, \\
													\int_F (\rti \vec{v})\cdot \vec{n}_F p_h, &\text{for all } F \in \boundaryFaces,
												\end{cases} \quad \text{for all } p_h \in P_1(F).
\end{align*}
This definition of $\bdmih$ on the boundary facets is necessary in order for $\bdmih \vec{v}_h \cdot \vec{n}$ to vanish along the boundary for $\vec{v}_h \in \vec{X}_h$, and thus to establish the $L^2$ orthogonality with gradients.
Note that due to continuity at the facet barycenters $\vec{x}_F$ and the use of the average, all interpolation operators are well-defined for all elements of $\vec{X}\oplus \vec{X}_h$.
Additionally let $\pi_h:Q\rightarrow Q_h$ be the $L^2$ projection onto the discrete pressure space, which is defined for $p\in Q$ by
\begin{equation*}
	(\pi_h p,q_h) = (p,q_h),\quad \text{for all } q_h\in Q_h.
\end{equation*}

The Raviart--Thomas interpolation and the $L^2$ projection operators will be applied to matrices and vectors, respectively, which we then denote by $\Rti$ and and $\Pi_h$. The operators then have to be understood as acting row by row.

Using the discrete bilinear and linear forms
\begin{alignat*}{2}
	&a_h:\vec{X}_h\times\vec{X}_h \rightarrow \R,&&\quad a_h(\vec{u}_h,\vec{v}_h) = \nu \int_\Omega \nabla_h\vec{u}_h : \nabla_h\vec{v}_h,	\\
	&b_h:\vec{X}_h\times Q_h \rightarrow \R,&&\quad b_h(\vec{v}_h, q_h) = -\int_\Omega q_h \nabla_h\cdot\vec{v}_h,\\
	&l_h:\vec{X}_h\rightarrow \R,&&\quad l_h(\vec{v}_h) = \int_\Omega \vec{f} \cdot \hdivi \vec{v}_h , %
\end{alignat*}
with $\hdivi \in \{\rti, \bdmih\}$, see \cite{Linke2014,BrenneckeLinkeMerdonSchoberl2015}, we get a discrete weak formulation of \eqref{eq:stokescont}: find $(\vec{u}_h,p_h)\in \vec{X}_h\times Q_h$ so that 
\begin{equation}\label{eq:stokesweak}
	\begin{split}
		a_h(\vec{u}_h,\vec{v}_h) + b_h(\vec{v}_h,p_h) &= l_h(\vec{v}_h),	\\
		b_h(\vec{u}_h,q_h) &= 0,%
	\end{split}
\end{equation}
holds for all $(\vec{v}_h,q_h)\in \vec{X}_h\times Q_h$. 
Like in the continuous case, now using the space of discretely divergence constrained functions
\begin{equation*}
	\vec{V}^0_h = \left\{\vec{v}_h\in \vec{X}_h : b(\vec{v}_h, q_h) = 0 \text{ for all } q_h\in Q_h\right\},
\end{equation*}
we can write this problem in the elliptic form \cite{LinkeMerdonWollner2017,BrenneckeLinkeMerdonSchoberl2015,GiraultRaviart1986}: find $\vec{u}_h\in \vec{V}^0_h$ so that
\begin{equation}\label{eq:stokeselliptic}
	a_h(\vec{u}_h,\vec{v}_h) = l_h(\vec{v}_h), \quad \text{for all }\vec{v}_h\in\vec{V}^0_h.	
\end{equation}

The reason for the particular choice of the linear form $l_h$ is described in detail for the Raviart--Thomas interpolation in \cite{Linke2014} and subsequently for various other cases in \cite{LinkeMerdonWollner2017,LinkeMatthiesTobiska2016,JohnLinkeMerdonNeilanRebholz2017,LinkeMerdon2016,LedererLinkeMerdonSchoberl2017}.
The fundamental idea is, that by using an interpolation operator that maps discretely divergence free functions to exactly divergence free functions, which are $L^2$ orthogonal to irrotational functions, it is possible to achieve pressure-robustness for the discrete formulation.

Concluding this section, we state a commutative property for the introduced interpolation operators.
\begin{lemma}\label{lem:commutative}
	For all $\vec{v}\in \vec{X}$ there holds
	\begin{align*}
		\nabla_h \cdot \cri\vec{v} = \pi_h(\nabla\cdot\vec{v}), \\
		\nabla \cdot \rti \vec{v} = \pi_h(\nabla\cdot\vec{v}), \\
		\nabla \cdot \bdmih \vec{v} = \pi_h(\nabla\cdot\vec{v}).
	\end{align*}
\end{lemma}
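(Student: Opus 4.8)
The plan is to prove all three identities element by element; for the Crouzeix--Raviart case the broken divergence on the left is, of course, understood elementwise, which is exactly the setting of the computation anyway. Fix $T\in\mathcal{T}_h$. Since the $L^2$ projection onto the piecewise constants acts as the local mean (the global zero-mean constraint is automatic because $\int_\Omega\nabla\cdot\vec{v}=0$ for $\vec{v}\in\vec{X}$), one has $\pi_h(\nabla\cdot\vec{v})|_T=\frac{1}{\abs{T}}\int_T\nabla\cdot\vec{v}$, while each interpolant has \emph{constant} divergence on $T$: both $\cri\vec{v}|_T$ and $\bdmih\vec{v}|_T$ lie in $\vec{P}_1$, and $\rti\vec{v}|_T=\vec{a}_T+b_T(\vec{x}-\vec{x}_T)$ has divergence $d\,b_T$. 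Hence it is enough to show $\int_T\nabla\cdot(I_h\vec{v})=\int_T\nabla\cdot\vec{v}$ for $I_h\in\{\cri,\rti,\bdmih\}$, and I would get this from the divergence theorem,
\begin{equation*}
	\int_T\nabla\cdot(I_h\vec{v})=\sum_{F\in\mathcal{F}(T)}\int_F (I_h\vec{v})\cdot\vec{n}_{F_T}
	\quad\text{and}\quad
	\int_T\nabla\cdot\vec{v}=\sum_{F\in\mathcal{F}(T)}\int_F \vec{v}\cdot\vec{n}_{F_T},
\end{equation*}
by matching the facet integrals one at a time.

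For the facet matching I would exploit that on each flat facet $F$ the normal trace of the interpolant is a low-order polynomial that the defining degrees of freedom pin down exactly. For $\cri$, the trace $(\cri\vec{v})\cdot\vec{n}_{F_T}$ is affine on $F$, so the barycentric quadrature rule is exact and $\int_F(\cri\vec{v})\cdot\vec{n}_{F_T}=\abs{F}\,(\cri\vec{v})(\vec{x}_F)\cdot\vec{n}_{F_T}=\int_F\vec{v}\cdot\vec{n}_{F_T}$ directly from $\cri\vec{v}(\vec{x}_F)=\frac{1}{\abs{F}}\int_F\vec{v}$. For $\rti$, the trace $(\rti\vec{v})\cdot\vec{n}_F$ is in fact constant on $F$, because $(\vec{x}-\vec{x}_T)\cdot\vec{n}_F$ is constant on the hyperplane containing $F$, and again equals its value at $\vec{x}_F$, which by the Raviart--Thomas degrees of freedom reproduces $\frac{1}{\abs{F}}\int_F\vec{v}\cdot\vec{n}_F$. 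For $\bdmih$, I would test the defining relation with $p_h\equiv 1\in P_1(F)$: on interior facets this gives $\int_F(\bdmih\vec{v})\cdot\vec{n}_F=\int_F\avg{\vec{v}\cdot\vec{n}_F}=\int_F\vec{v}\cdot\vec{n}_F$, using that $\vec{v}\in\vec{X}\subset\vec{H}^1(\Omega)$ has a single-valued normal trace; on boundary facets it gives $\int_F(\bdmih\vec{v})\cdot\vec{n}_F=\int_F(\rti\vec{v})\cdot\vec{n}_F$, which the Raviart--Thomas step has already identified with $\int_F\vec{v}\cdot\vec{n}_F$. In all three cases, accounting for the orientation $\vec{n}_{F_T}=\pm\vec{n}_F$, one obtains $\int_F(I_h\vec{v})\cdot\vec{n}_{F_T}=\int_F\vec{v}\cdot\vec{n}_{F_T}$; summing over $\mathcal{F}(T)$ and dividing by $\abs{T}$ then finishes the proof.

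I do not expect a genuine obstacle here; this is the standard commuting-diagram bookkeeping. The two points that deserve a little care are the orientation convention, i.e.\ making sure that the arbitrary-but-fixed choice of $\vec{n}_F$ entering the degrees of freedom cancels consistently against $\vec{n}_{F_T}$, and, for $\bdmih$, the nonstandard boundary definition, which is handled precisely by reducing it to the Raviart--Thomas case together with the $\vec{H}^1$-regularity of $\vec{v}$ that makes the average in the interior definition redundant. Note that the argument uses no shape-regularity and no angle condition whatsoever, which is why this lemma can be stated at this level of generality.
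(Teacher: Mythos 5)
Your argument is correct and is precisely the one the paper intends: its proof consists of the single sentence that the identities ``follow by the divergence theorem and the definition of the interpolation operators,'' which is exactly the elementwise reduction and facet-by-facet matching you carry out in detail. Your added care about the $L^2_0$ constraint on $\pi_h$, the orientation of $\vec{n}_F$ versus $\vec{n}_{F_T}$, and the boundary-facet definition of $\bdmih$ is all sound and fills in what the paper leaves implicit.
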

\begin{proof}
The properties follow by the divergence theorem and the definition of the interpolation operators. See also \cite{CrouzeixRaviart1973,Nedelec1980,Nedelec1986}.
\end{proof}
In particular this means $\nabla_h \cdot \cri\vec{v} = \nabla \cdot \rti \vec{v} = \nabla \cdot \bdmih \vec{v} = 0$ for $\vec{v} \in \vec{V}^0$.

\section{Some properties of the Crouzeix--Raviart element concerning anisotropic triangulations}\label{sec:CR_results}
The Crouzeix--Raviart element has some properties, which make it very suitable for settings with anisotropic triangulations. In this section we collect some, mostly known, results as an overview of the anisotropic properties of the element.

By \cite[Lemma II.1.1]{GiraultRaviart1986}, a discrete Fortin operator $\fori : \vec{X} \rightarrow \vec{X}_h$ is defined by the properties
\begin{equation}\label{eq:fortinoperator}
	(\nabla_h\cdot\vec{v},q_h)_{L^2(\Omega)} = (\nabla_h\cdot\fori \vec{v},q_h)_{L^2(\Omega)} \quad \text{for all } q_h \in Q_h,
\end{equation}
and
\begin{equation*}
	\exists C_F>0: \norm{\fori \vec{v}}_{1,h} \leq C_F \norm{\vec{v}}_{1,h},
\end{equation*}
with $C_F$ independent of $h$. The existence of such an operator is equivalent to the discrete inf-sup condition holding with a constant $\widetilde{\beta} >0$, independent of $h$.

\begin{lemma}\label{lem:CRFortin}
The Crouzeix--Raviart interpolator $\cri$ is a Fortin operator on arbitrary meshes with Fortin constant $C^{\text{CR}}_F = 1$, i.e. the estimate
	\begin{equation} \label{eq:CRFortin}
		\norm{\cri \vec{v}}_{1,h} \leq \norm{\vec{v}}_{1,h}
	\end{equation}
	holds.
\end{lemma}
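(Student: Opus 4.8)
The plan is to verify the stated inequality element by element, exploiting the fact that the Crouzeix--Raviart interpolation $\cri$ is defined through facet-averaged values and therefore commutes nicely with the broken gradient. The key observation is that on each simplex $T$, the local piece $\cri\vec{v}|_T$ is the unique affine function whose averages over the facets of $T$ match those of $\vec{v}$. I would first reduce the vector-valued statement to a scalar one: since $\norm{\vec{w}}_{1,h}^2 = \sum_{i} \norm{w_i}_{1,h}^2$ for the components $w_i$, and $\cri$ acts componentwise, it suffices to prove $\norm{\nabla_h (\cri v)}_0 \le \norm{\nabla_h v}_0$ for scalar $v \in H^1_0(\Omega)$ (or more precisely for each component, which lies in $H^1_0$). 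Summing over components then gives \eqref{eq:CRFortin}.

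The heart of the argument is the pointwise (on each $T$) identity
\begin{equation*}
	\nabla(\cri v|_T) = \frac{1}{\abs{T}} \int_T \nabla v \,.
\end{equation*}
To see this, note that $\nabla(\cri v|_T)$ is a constant vector since $\cri v|_T$ is affine; to identify it, I would test against an arbitrary constant vector $\vec{c}$ and use the divergence theorem: $\abs{T}\,\vec{c}\cdot\nabla(\cri v|_T) = \int_T \nabla(\cri v|_T)\cdot\vec{c} = \sum_{F\in\mathcal{F}(T)} \int_F (\cri v) (\vec{c}\cdot\vec{n}_{F_T})$, because $\cri v|_T$ is affine so the boundary integrand splits facet-wise and $\vec{c}\cdot\vec{n}_{F_T}$ is constant on each facet $F$. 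By the defining property of $\cri$, $\int_F \cri v = \int_F v$ for every facet, so this equals $\sum_{F} \int_F v\,(\vec{c}\cdot\vec{n}_{F_T}) = \int_{\partial T} v\,(\vec{c}\cdot\vec{n}_{F_T}) = \int_T \nabla v \cdot \vec{c}$, again by the divergence theorem (here using $\vec{c}$ constant so $\nabla\cdot(v\vec{c}) = \vec{c}\cdot\nabla v$). Since $\vec{c}$ was arbitrary, $\nabla(\cri v|_T) = \frac{1}{\abs{T}}\int_T \nabla v$, i.e.\ the constant gradient of the local interpolant is exactly the average of $\nabla v$ over $T$.

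From there the conclusion is immediate via Jensen's (or Cauchy--Schwarz) inequality: for each $T$,
\begin{equation*}
	\abs{\nabla(\cri v|_T)}^2 = \left\lvert \frac{1}{\abs{T}}\int_T \nabla v \right\rvert^2 \le \frac{1}{\abs{T}}\int_T \abs{\nabla v}^2 ,
\end{equation*}
so $\int_T \abs{\nabla_h \cri v}^2 \le \int_T \abs{\nabla v}^2$; summing over all $T \in \mathcal{T}_h$ and taking square roots gives $\norm{\cri v}_{1,h} \le \norm{v}_{1,h}$. Finally, since the argument used only the divergence theorem on a single simplex and a convexity estimate, no shape-regularity or angle condition enters anywhere, which is exactly why $C_F^{\text{CR}} = 1$ holds on \emph{arbitrary} meshes. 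The only mild subtlety — and the one point I would be careful to state cleanly — is the reduction to components and the verification that the boundary integral of the affine function $\cri v|_T$ against a constant normal flux really does decompose facet-by-facet against the facet averages; everything else is routine. (The property \eqref{eq:fortinoperator}, i.e.\ that $\cri$ also reproduces the divergence against piecewise constants, is already recorded in \Cref{lem:commutative}, so it need not be reproved here.)
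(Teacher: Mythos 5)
Your proof is correct, and it is precisely the standard argument (the elementwise identity $\nabla(\cri v|_T)=\frac{1}{\abs{T}}\int_T\nabla v$ followed by Jensen's inequality) that the paper outsources to \cite{ApelNicaiseSchoberl2001:2}, whose Lemma~2 proceeds in exactly this way; the paper itself gives no proof beyond that citation. All the steps you flag as subtleties (componentwise reduction, facet-wise splitting of the boundary integral against a constant normal flux, and the fact that no mesh condition enters) check out.
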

\begin{proof}
For the proof see \cite[Lemma 2]{ApelNicaiseSchoberl2001:2} and the comment after \cite[Corollary 1]{ApelNicaiseSchoberl2001:2}.
\end{proof}

Using this result, we get the inf-sup condition of the Crouzeix--Raviart element for the Stokes problem on arbitrary meshes with a constant independent of the mesh. Another proof for the inf-sup condition is given in \cite[Theorem 3.151]{John2016}, and see also \cite[Section 3, p. 23]{AcostaDuran1999}.
\begin{lemma}\label{lem:infsup}
	Let $h>0$ and $\vec{v}_h \in \vec{X}_h$, then there is a constant $\widetilde{\beta} > 0$ independent of $h$, so that the estimate 
	\begin{equation*}
		\inf_{q_h\in Q_h} \sup_{\vec{v}_h\in \vec{X}_h} \frac{b_h(\vec{v}_h,q_h)}{\norm{\vec{v}_h}_{1,h} \norm{q_h}_0} \geq \widetilde{\beta}
	\end{equation*}
	holds for arbitrary meshes.
\end{lemma}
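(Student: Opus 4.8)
The plan is to derive the discrete inf-sup condition directly from the Fortin property of $\cri$ established in \Cref{lem:CRFortin}, using the standard argument that a bounded Fortin operator transfers the continuous inf-sup condition to the discrete level. First I would fix an arbitrary $q_h \in Q_h \subset Q$. By the continuous inf-sup condition stated at the end of the subsection on the continuous setting, there exists $\vec{v} \in \vec{X}$ with $\vec{v} \neq 0$ and
\begin{equation*}
	\frac{b(\vec{v},q_h)}{\norm{\vec{v}}_{\vec{X}} \norm{q_h}_0} \geq \beta.
\end{equation*}
Since $\norm{\vec{v}}_{\vec{X}} = \norm{\vec{v}}_{1,h}$ for $\vec{v}\in\vec{X}$ (the broken gradient coincides with the usual one on $\vec{X}$, and the $\vec{H}^1_0$ seminorm is a norm there by Poincaré), this reads $b(\vec{v},q_h) \geq \beta \norm{\vec{v}}_{1,h} \norm{q_h}_0$.

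Next I would push $\vec{v}$ into $\vec{X}_h$ via the Crouzeix--Raviart interpolant $\cri\vec{v}$. The Fortin commutation property \eqref{eq:fortinoperator} — equivalently, the first identity in \Cref{lem:commutative} together with the definition of $\pi_h$ and the fact that $q_h$ is piecewise constant — gives
\begin{equation*}
	b_h(\cri\vec{v}, q_h) = -\int_\Omega q_h \, \nabla_h\cdot\cri\vec{v} = -\int_\Omega q_h \, \pi_h(\nabla\cdot\vec{v}) = -\int_\Omega q_h \, \nabla\cdot\vec{v} = b(\vec{v},q_h),
\end{equation*}
where the third equality uses $q_h \in Q_h$ and the definition of the $L^2$-projection $\pi_h$. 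Combining this with the stability bound $\norm{\cri\vec{v}}_{1,h} \leq \norm{\vec{v}}_{1,h}$ from \Cref{lem:CRFortin}, we obtain
\begin{equation*}
	\frac{b_h(\cri\vec{v},q_h)}{\norm{\cri\vec{v}}_{1,h}\,\norm{q_h}_0} = \frac{b(\vec{v},q_h)}{\norm{\cri\vec{v}}_{1,h}\,\norm{q_h}_0} \geq \frac{b(\vec{v},q_h)}{\norm{\vec{v}}_{1,h}\,\norm{q_h}_0} \geq \beta,
\end{equation*}
provided $\cri\vec{v} \neq 0$ (which holds here since $b_h(\cri\vec{v},q_h) = b(\vec{v},q_h) > 0$ whenever $q_h \neq 0$; the case $q_h = 0$ is trivial). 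Taking the supremum over $\vec{v}_h \in \vec{X}_h$ and then the infimum over $q_h \in Q_h$ yields the claim with $\widetilde\beta = \beta$, independent of $h$.

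The only subtle point — and the reason this works on arbitrary, possibly anisotropic, meshes — is that the constant in the Fortin stability estimate is $C_F^{\text{CR}} = 1$ regardless of the aspect ratio, so no shape-regularity enters; every other ingredient (the continuous inf-sup condition, the exactness of $\pi_h$ on $Q_h$, the commutation identity) is mesh-independent by construction. I do not anticipate a genuine obstacle: this is essentially the textbook Fortin-operator argument (cf. \cite[Lemma II.1.1]{GiraultRaviart1986}), and the excerpt even notes that the existence of a bounded Fortin operator is \emph{equivalent} to the discrete inf-sup condition, so one could alternatively just cite that equivalence together with \Cref{lem:CRFortin}. The minor bookkeeping to handle is ensuring $\cri\vec{v}$ is nonzero so the quotient is well-defined, which follows immediately from $b_h(\cri\vec{v},q_h) = b(\vec{v},q_h) \neq 0$.
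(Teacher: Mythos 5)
Your proposal is correct and follows essentially the same route as the paper: both transfer the continuous inf-sup condition to the discrete level via the Crouzeix--Raviart interpolant, using the commutation identity $b_h(\cri\vec{v},q_h)=b(\vec{v},q_h)$ and the stability bound $\norm{\cri\vec{v}}_{1,h}\leq\norm{\vec{v}}_{1,h}$ from \Cref{lem:CRFortin}, arriving at $\widetilde\beta=\beta$. The only cosmetic difference is that the paper keeps the supremum over $\vec{v}\in\vec{X}$ throughout the chain of inequalities rather than selecting a (near-)supremizer, which sidesteps your bookkeeping about $\cri\vec{v}\neq 0$ and attainment of the supremum.
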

\begin{proof}
By \eqref{eq:fortinoperator} and \eqref{eq:CRFortin}, we have the estimate
\begin{equation*}
	\sup_{\vec{v}_h\in \vec{X}_h} \frac{b_h(\vec{v}_h,q_h)}{\norm{\vec{v}_h}_{1,h}} \geq \sup_{\vec{v}\in \vec{X}} \frac{b_h(\cri\vec{v},q_h)}{\norm{\cri\vec{v}}_{1,h}} = \sup_{\vec{v}\in \vec{X}} \frac{b(\vec{v},q_h)}{\norm{\cri\vec{v}}_{1,h}} \geq \sup_{\vec{v}\in \vec{X}} \frac{b(\vec{v},q_h)}{\norm{\vec{v}}_{1,h}} \geq \beta \norm{q_h}_0,
\end{equation*}
for all $q_h\in Q_h$, where $\beta$ is the continuous inf-sup constant.
\end{proof}
This lemma implies that the discrete inf-sup constant for the Crouzeix--Raviart element is bounded from below by the continuous inf-sup constant for any triangulation, an we may choose $\widetilde{\beta} = \beta$, see \cite[Theorem 3.151]{John2016}. Additionally, it was shown in \cite[Lemma 5]{Gallistl2019}, that the discrete inf-sup constant decreases monotonously when refining a mesh.

The next lemma shows that Crouzeix--Raviart interpolation is as accurate as the standard nodal Lagrange interpolation.
\begin{lemma}
	Let $\vec{v}\in\vec{X}\cap \vec{H}^2(\Omega)$. Then the estimate
	\begin{equation*}
		\norm{\vec{v}-\cri\vec{v}}_{1,h} \leq 2\norm{\vec{v}-\li \vec{v}}_{1,h}
	\end{equation*}
	holds for arbitrary meshes, where $\li$ is the nodal Lagrange interpolation operator.
\end{lemma}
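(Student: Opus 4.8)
The plan is to combine a triangle inequality with the $\vec{P}_1$-reproduction property of the Crouzeix--Raviart interpolant and the stability bound of \Cref{lem:CRFortin}. Since $\vec{v}\in\vec{H}^2(\Omega)$ and $d\in\{2,3\}$, the embedding $\vec{H}^2(\Omega)\hookrightarrow\vec{C}^0(\overline{\Omega})$ makes the nodal interpolant $\li\vec{v}$ well defined; it is continuous, piecewise affine, and vanishes at the boundary vertices because $\vec{v}|_{\partial\Omega}=0$, so $\li\vec{v}\in\vec{X}\cap\vec{X}_h$ and in particular $\vec{v}-\li\vec{v}\in\vec{X}$.

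First I would record that $\cri$ reproduces continuous piecewise affine functions. If $\vec{w}_h\in\vec{X}_h$ is continuous, then on every facet $F$ its trace $\vec{w}_h|_F$ is affine and single-valued, hence $\frac{1}{\abs{F}}\int_F\vec{w}_h=\vec{w}_h(\vec{x}_F)$; since $\cri\vec{w}_h$ is the unique element of $\vec{X}_h$ with these facet-barycenter values and $\vec{w}_h$ already has them, $\cri\vec{w}_h=\vec{w}_h$. Taking $\vec{w}_h=\li\vec{v}$ gives $\cri\li\vec{v}=\li\vec{v}$, so by linearity $\cri\vec{v}-\li\vec{v}=\cri(\vec{v}-\li\vec{v})$. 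Then
\begin{equation*}
	\norm{\vec{v}-\cri\vec{v}}_{1,h} \le \norm{\vec{v}-\li\vec{v}}_{1,h} + \norm{\li\vec{v}-\cri\vec{v}}_{1,h} = \norm{\vec{v}-\li\vec{v}}_{1,h} + \norm{\cri(\vec{v}-\li\vec{v})}_{1,h},
\end{equation*}
and applying \Cref{lem:CRFortin} to $\vec{v}-\li\vec{v}\in\vec{X}$ bounds the last term by $\norm{\vec{v}-\li\vec{v}}_{1,h}$, which yields the factor $2$. No mesh regularity enters, since \Cref{lem:CRFortin} holds on arbitrary simplicial meshes.

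There is no genuinely hard step here; the points that need care are that $\li\vec{v}$ is an admissible argument of $\cri$ and lies in $\vec{X}$ (this is where $\vec{v}\in\vec{H}^2(\Omega)$ and the homogeneous boundary condition are used) and the identity $\cri\li\vec{v}=\li\vec{v}$. I would additionally remark that the constant can be sharpened to $1$: the divergence theorem gives $\int_T\nabla_h\cri\vec{v}=\sum_{F\in\mathcal{F}(T)}\int_F\cri\vec{v}\otimes\vec{n}_{F_T}=\sum_{F\in\mathcal{F}(T)}\int_F\vec{v}\otimes\vec{n}_{F_T}=\int_T\nabla\vec{v}$, so the constant matrix $\nabla_h\cri\vec{v}|_T$ is the $\vec{L}^2(T)$-best approximation of $\nabla\vec{v}|_T$ among constants, with $\nabla\li\vec{v}|_T$ one competitor; the weaker factor $2$ is stated here because it follows most directly from the tools already assembled and is amply sufficient for the subsequent error analysis.
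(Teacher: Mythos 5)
Your argument is correct and is essentially the paper's own proof: triangle inequality, the reproduction property $\cri\li\vec{v}=\li\vec{v}$, and the stability bound of \Cref{lem:CRFortin} applied to $\vec{v}-\li\vec{v}\in\vec{X}$, none of which requires any mesh condition. Your closing observation that $\nabla_h\cri\vec{v}|_T$ is the elementwise mean of $\nabla\vec{v}$, so the constant could be sharpened to $1$, is a correct extra remark not present in the paper.
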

\begin{proof} 
	The proof follows part of the proof of \cite[Lemma 4.53]{John2016}, but note that no condition on the mesh is required for this section of the proof. 
	Using the triangle inequality, the property $\cri\li\vec{v} = \li\vec{v}$ and \Cref{lem:CRFortin}, we get the desired estimate
	\begin{equation*}
	\begin{split}
		\norm{\vec{v}-\cri\vec{v}}_{1,h} &\leq \norm{\vec{v}-\li\vec{v}}_{1,h} + \norm{\li\vec{v}-\cri\vec{v}}_{1,h} \\
		&= \norm{\vec{v}-\li\vec{v}}_{1,h} + \norm{\cri(\li\vec{v}-\vec{v})}_{1,h} \\
		&\leq 2\norm{\vec{v}-\li \vec{v}}_{1,h}. \qedhere
	\end{split}
	\end{equation*}
\end{proof}

In \cite[Lemma 3]{ApelNicaiseSchoberl2001:2} the following interpolation error estimate for the Crouzeix--Raviart interpolator for triangulations satisfying a maximum angle condition was shown. We state the result without proof.
\begin{lemma}\label{lem:crinterpolation}
	Let $\vec{v}\in\vec{X}\cap \vec{H}^2(\Omega)$ and let the mesh satisfy $\MAC(\bar{\phi})$. Then we have the estimate
	\begin{equation*}
		\norm{\vec{v}-\cri\vec{v}}_{1,h} \leq Ch\abs{\vec{v}}_2.
	\end{equation*}
\end{lemma}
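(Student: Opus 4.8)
The plan is to reduce the claim to a purely elementwise bound and then exploit a special structural feature of the Crouzeix--Raviart interpolation. First I would note that $\cri$ is a local operator: on a simplex $T$ the field $(\cri\vec v)|_T$ is the unique $\vec{P}_1$ function whose value at each facet barycenter $\vec{x}_F$, $F\in\mathcal{F}(T)$, equals the facet average $\frac1{\abs{F}}\int_F\vec v$, since the $d+1$ facet barycenters are unisolvent for $\vec{P}_1$. Hence $\norm{\vec v-\cri\vec v}_{1,h}^2=\sum_{T\in\mathcal T_h}\abs{\vec v-\cri\vec v}_{H^1(T)}^2$, and it suffices to bound each local seminorm; moreover $\cri$ acts componentwise, so below $v$ denotes one scalar component of $\vec v$ and $Iv:=(\cri\vec v)|_T$ its interpolant on $T$.

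The key fact I would establish next is that $\cri$ reproduces the mean gradient on every element, i.e. $\int_T\nabla(Iv)=\int_T\nabla v$. Since $Iv$ is affine, its average over a facet $F$ equals its value at the barycenter, so $\int_F Iv=\abs{F}\,(Iv)(\vec{x}_F)=\int_F v$; applying the gradient theorem on $T$ to $Iv$ and to $v$ then gives $\int_T\nabla(Iv)=\sum_{F\in\mathcal{F}(T)}\vec{n}_{F_T}\int_F Iv=\sum_{F\in\mathcal{F}(T)}\vec{n}_{F_T}\int_F v=\int_T\nabla v$. Because $\nabla(Iv)|_T$ is a constant vector, this identity says precisely that $\nabla(Iv)|_T=\frac1{\abs{T}}\int_T\nabla v$ is the $L^2(T)$-orthogonal projection of $\nabla v$ onto constants.

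From here the estimate is immediate. We obtain $\abs{v-Iv}_{H^1(T)}=\inf_{\vec c\in\R^d}\norm{\nabla v-\vec c}_{L^2(T)}$, and applying the Poincaré--Wirtinger inequality on the convex set $T$ componentwise to $\nabla v$ bounds the right-hand side by $\frac{\diam T}{\pi}\abs{v}_{H^2(T)}$, the Payne--Weinberger constant $\diam(T)/\pi=h_T/\pi$ being valid on any bounded convex domain. Squaring, summing over the components of $\vec v$ and over all $T\in\mathcal T_h$, and using $h_T\le h$ then yields $\norm{\vec v-\cri\vec v}_{1,h}\le\frac h\pi\abs{\vec v}_2$, i.e. the assertion with an explicit mesh-independent constant.

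The only nonroutine ingredient is the mean-gradient identity of the second step; once it is available the proof needs no approximation-theoretic machinery and, in particular, no shape regularity whatsoever, so the hypothesis $\MAC(\bar{\phi})$ is actually superfluous for this particular lemma. The genuine difficulty caused by anisotropy surfaces only later, for the $\rti$ and $\bdmih$ interpolation estimates of \Cref{sec:a_priori_errors}, where no analogous identity is available: there one must instead pull an element back to one of the reference families $\mathcal{F}_1,\mathcal{F}_2$ and run an anisotropic Bramble--Hilbert argument whose constants are controlled through $\norm{J_T}_\infty,\norm{J_T^{-1}}_\infty\le C(\bar{\phi})$. Were one to insist on proving the Crouzeix--Raviart estimate along those lines, the obstacle would be verifying that the facet-average functionals scale correctly under the anisotropic affine map and that the modified Bramble--Hilbert lemma still applies on the stretched reference element — precisely the technicalities that the Payne--Weinberger route circumvents.
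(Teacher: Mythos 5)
Your proof is correct, and it takes a genuinely different route from the paper, which does not prove this lemma at all but cites it from Apel--Nicaise--Sch\"oberl (Lemma~3 of the reference given after the statement), where it is obtained by mapping to the anisotropic reference families and a Bramble--Hilbert-type argument. Your two ingredients are sound: the mean-gradient identity $\int_T\nabla(\cri v)=\int_T\nabla v$ follows exactly as you say from facet-average preservation and the gradient theorem (it is the same projection property $\nabla_h\cri\vec v=\Pi_0\nabla\vec v$ that underlies the Fortin bound of \Cref{lem:CRFortin} with constant $1$), and the Payne--Weinberger constant $\diam(T)/\pi$ is valid on any bounded convex set, hence on any simplex with no shape restriction. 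What your argument buys is an explicit, mesh-independent constant $1/\pi$ and the observation that $\MAC(\bar\phi)$ is not needed for this particular estimate, so the lemma actually holds on arbitrary simplicial meshes --- a genuine (if well-known) strengthening of the statement as quoted. What the reference's route buys instead, and what your approach cannot reproduce, are the finer \emph{anisotropic} estimates with directional derivatives and edge lengths $h_{T,i}$ (of the kind appearing in \Cref{lem:rtinterpolation}), for which the maximum angle condition is genuinely needed; for the isotropic-looking bound $Ch\abs{\vec v}_2$ your elementary argument is complete and preferable.
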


The next lemma states, that the discretely divergence constrained Crouzeix--Raviart functions can be used to approximate the continuously constrained $\vec{H}^1_0(\Omega)$ functions.
\begin{lemma}
	Let $\vec{w}\in\vec{V}^0$. Then the estimate
	\begin{equation*}
		\inf_{\vec{w}_h\in\vec{V}^0_h} \norm{\vec{w}-\vec{w}_h}_{1,h} \leq 2\inf_{\vec{v}_h\in\vec{X}_h}\norm{\vec{w}-\vec{v}_h}_{1,h}
	\end{equation*}
	holds for an arbitrary triangulation.
\end{lemma}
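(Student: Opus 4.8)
The plan is to mirror the proof of the preceding lemma comparing $\cri$ with nodal Lagrange interpolation: the operator $\cri$ will serve to turn the continuous object $\vec{w}$ into an \emph{admissible} discrete competitor lying in $\vec{V}^0_h$, and the factor $2$ will then come from a triangle inequality combined with the unit-norm stability \eqref{eq:CRFortin}. The structural point to keep in mind is that $\vec{V}^0_h\not\subset\vec{V}^0$, so this is not a plain best-approximation estimate; one cannot simply take the $\vec{X}_h$-best approximation of $\vec{w}$ and hope it is discretely divergence constrained, and the whole content of the lemma is that $\cri$ repairs exactly this mismatch.

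First I would verify that $\cri\vec{w}$ is admissible. Since $\vec{w}\in\vec{V}^0\subset\vec{X}$, the interpolant $\cri\vec{w}\in\vec{X}_h$ is well defined, and \Cref{lem:commutative} gives $\nabla_h\cdot\cri\vec{w}=\pi_h(\nabla\cdot\vec{w})=\pi_h(0)=0$, hence $b_h(\cri\vec{w},q_h)=0$ for every $q_h\in Q_h$, i.e. $\cri\vec{w}\in\vec{V}^0_h$. Consequently $\inf_{\vec{w}_h\in\vec{V}^0_h}\norm{\vec{w}-\vec{w}_h}_{1,h}\le\norm{\vec{w}-\cri\vec{w}}_{1,h}$. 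Next, for an arbitrary $\vec{v}_h\in\vec{X}_h$ I would insert $\vec{v}_h$ and use that $\cri$ is a projection onto $\vec{X}_h$, so $\cri\vec{v}_h=\vec{v}_h$ and therefore $\vec{v}_h-\cri\vec{w}=\cri(\vec{v}_h-\vec{w})$ by linearity. This yields
\begin{equation*}
	\norm{\vec{w}-\cri\vec{w}}_{1,h}\le\norm{\vec{w}-\vec{v}_h}_{1,h}+\norm{\cri(\vec{v}_h-\vec{w})}_{1,h}\le 2\,\norm{\vec{w}-\vec{v}_h}_{1,h},
\end{equation*}
and taking the infimum over $\vec{v}_h\in\vec{X}_h$ finishes the proof.

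The only place where a word of care is needed, and the only potential friction in the argument, is that \eqref{eq:CRFortin} as stated in \Cref{lem:CRFortin} refers to $\cri$ on $\vec{X}$, whereas $\vec{v}_h-\vec{w}$ lies in the partly nonconforming space $\vec{X}\oplus\vec{X}_h$. This is harmless because $\nabla_h\cri$ acts as the elementwise $L^2$-projection onto piecewise constant matrices: on each $T$ one has $\nabla(\cri\vec{v})|_T=\frac{1}{|T|}\int_{\partial T}(\cri\vec{v})\otimes\vec{n}_{F_T}=\frac{1}{|T|}\int_{\partial T}\vec{v}\otimes\vec{n}_{F_T}=\frac{1}{|T|}\int_T\nabla\vec{v}$ for any $\vec{v}$ whose facet means coincide with those of $\cri\vec{v}$, so $\norm{\cri\vec{z}}_{1,h}\le\norm{\vec{z}}_{1,h}$ for all $\vec{z}\in\vec{X}\oplus\vec{X}_h$; alternatively one may simply invoke the remark following \cite[Corollary 1]{ApelNicaiseSchoberl2001:2}. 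No mesh regularity enters anywhere in this chain, which is consistent with the statement holding for an arbitrary triangulation.
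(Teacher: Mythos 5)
Your proof is correct and follows essentially the same route as the paper's: the paper takes the competitor $\vec{w}_h=\vec{v}_h+\cri(\vec{w}-\vec{v}_h)$, which by the projection property $\cri\vec{v}_h=\vec{v}_h$ is exactly your $\cri\vec{w}$, verifies admissibility via \Cref{lem:commutative}, and gets the factor $2$ from the triangle inequality and the unit-norm stability of $\cri$ on $\vec{X}\oplus\vec{X}_h$. Your closing remark on extending \eqref{eq:CRFortin} to the nonconforming sum space is a point the paper glosses over, and your elementwise mean-value argument for it is valid.
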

\begin{proof}
	Let $\vec{v}_h\in\vec{X}_h$ be arbitrary and set $\vec{z}_h=\cri (\vec{w}-\vec{v}_h)\in \vec{X}_h$. Then we have $\norm{\vec{z}_h}_{1,h} \leq \norm{\vec{w}-\vec{v}_h}_{1,h}$ and $(\nabla_h\cdot \vec{z}_h,q_h) = (\nabla_h\cdot (\vec{w}-\vec{v}_h),q_h)$ for all $q_h\in Q_h$. We also get $\vec{w}_h = \vec{z}_h + \vec{v}_h \in \vec{V}^0_h$, because
	\begin{multline*}
		(\nabla_h\cdot \vec{w}_h,q_h) = (\nabla_h\cdot \vec{z}_h,q_h) + (\nabla_h \cdot \vec{v}_h, q_h) = (\nabla_h\cdot(\vec{w}-\vec{v}_h),q_h) + (\nabla_h\cdot \vec{v}_h,q_h) \\
		= (\nabla_h\cdot \vec{w},q_h) = 0.
	\end{multline*}
	Now using the triangle inequality we get the statement of the lemma
	\begin{equation*}
		\norm{\vec{w}-\vec{w}_h}_{1,h} \leq \norm{\vec{w}-\vec{v}_h}_{1,h} + \norm{\vec{z}_h}_{1,h} \leq 2\norm{\vec{w}-\vec{v}_h}_{1,h}. \qedhere
	\end{equation*}
\end{proof}

\section{A-priori error analysis}\label{sec:a_priori_errors}
As our method uses an interpolation operator on the velocity test functions in the linear form $l_h$, we need to estimate the additional consistency error of this approach. The proofs are mainly analogous to \cite{Linke2014}.

Before we get to the consistency error, we need error estimates of the Brezzi--Douglas--Marini and Raviart--Thomas interpolation on anisotropic elements, which we get from \cite{AcostaApelDuranLombardi2011,ApelKempf2019}.
Keeping in mind that by assumption the general triangulation $\mathcal{T}_h$ satisfies a maximum angle condition $\MAC(\bar{\phi})$, we have the following estimates, where we take $\hdivi \in \{\bdmih, \rti\}$.
\begin{lemma}\label{lem:rtinterpolation}%
	Let $\vec{v} \in \vec{X} \oplus \vec{X}_h$, then
	\begin{equation*}
		\norm{\vec{v}-\hdivi\vec{v}}_{0} \leq Ch \norm{\vec{v}}_{1,h},
	\end{equation*}
	where the constant $C$ depends only on $\bar{\phi}$.
	
	If an element $T\in\mathcal{T}_h$ additionally satisfies $\RVP(\bar{c})$, where $\vec{p}_{T,d+1}$ denotes the element's regular vertex and $\vec{l}_{T,i}$, $h_{T,i}$, $i\in\{1,\ldots,d\}$, the vectors and lengths from \Cref{def:regular_vertex_property}. Then for $\vec{v} \in \vec{X}\oplus\vec{X}_h$ there is a constant $C$ depending only on $\bar{c}$, so that the estimate
	\begin{equation*}
		\norm{\vec{v}-\hdivi \vec{v}}_{0,T} \leq C \left( h_T \norm{\nabla\cdot\vec{v}}_{0,T} + \sum_{i=1}^d h_{T,i} \norm{\pdv{\vec{v}}{\vec{l}_{T,i}}}_{0,T} \right)
	\end{equation*}
	holds.
\end{lemma}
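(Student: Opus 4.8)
The plan is to reduce both estimates to known anisotropic interpolation results for $\rti$ and $\bdmih$ on a single element, and then assemble the global $L^2$ bound from the elementwise one. The starting point is that both interpolation operators are defined locally (their degrees of freedom on a facet $F$ involve only the two adjacent elements through the average, and on boundary facets only one element), so it suffices to control $\norm{\vec{v}-\hdivi\vec{v}}_{0,T}$ on each $T$ in terms of data on $T$ — or on the patch $\omega_T$ of elements sharing a facet with $T$, in case one wants to keep the average hidden. For a genuinely piecewise $\vec{H}^1$ function $\vec{v}\in\vec{X}\oplus\vec{X}_h$ the average $\avg{\vec{v}\cdot\vec{n}_F}$ on an interior facet differs from $\vec{v}|_T\cdot\vec{n}_F$ only by a multiple of the jump, which is why the estimate is naturally stated over $T$ alone once one observes that for $\vec{v}\in\vec{X}$ there is no jump, and for $\vec{v}_h\in\vec{X}_h$ the jump vanishes at the barycenter $\vec{x}_F$ and has mean zero on $F$, so it is annihilated by the lowest-order facet moments. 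I would make this reduction explicit first, so that from then on one may treat $\vec{v}$ as an $\vec{H}^1$ function on each element.

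Next I would invoke the reference-family structure recalled after \Cref{fig:Reference_families}: any tetrahedron satisfying $\MAC(\bar\phi)$ (resp. $\RVP(\bar c)$) is the image under an affine map $F(\tilde{\vec{x}})=J_T\tilde{\vec{x}}+\vec{x}_0$ of an element of $\mathcal{F}_1\cup\mathcal{F}_2$ (resp. $\mathcal{F}_1$) with $\norm{J_T}_\infty,\norm{J_T^{-1}}_\infty\le C(\bar\phi)$ (resp. $C(\bar c)$) after rescaling the size parameters $h_i$. On the reference family one has the anisotropic Raviart--Thomas and Brezzi--Douglas--Marini interpolation error estimates of \cite{AcostaApelDuranLombardi2011,ApelKempf2019}: these are precisely of the two forms claimed, namely a crude $Ch_T\norm{\vec{v}}_{1,T}$-type bound valid under the maximum angle condition, and the sharper anisotropic bound splitting the gradient into directional derivatives along the edges $\vec{l}_{T,i}$ emanating from the regular vertex, valid under the regular vertex property. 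The Piola transform is the correct vehicle to move $\rti$ and $\bdmih$ between the reference element and $T$ because it commutes with both interpolations and behaves well with respect to the divergence; I would state that $\hdivi$ commutes with the Piola transform, pull the estimate back from $\tilde T$, and track how $J_T$, $\det J_T$ and the direction vectors transform — this is where the boundedness of $\norm{J_T}_\infty$ and $\norm{J_T^{-1}}_\infty$ is used to keep all constants dependent only on $\bar\phi$ or $\bar c$, and where the directional derivatives $\partial\vec{v}/\partial\vec{l}_{T,i}$ and the lengths $h_{T,i}$ appear naturally (the edge $\vec{l}_{T,i}$ on $T$ is the image of a coordinate direction on $\tilde T$). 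For the global estimate, I would then sum the squares of the elementwise bounds: $\sum_T h_T^2\norm{\vec v}_{1,T}^2 \le h^2 \norm{\vec v}_{1,h}^2$, using finite overlap of the patches (bounded by the maximum number of elements meeting a facet, which is $2$ for facets, hence $d+1$ per element) so no shape-regularity creeps in.

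The main obstacle, and the part deserving the most care, is the second (anisotropic, $\RVP$) estimate: getting the \emph{directional} derivatives right through the Piola transform. Under the Piola map $\vec{v} = (\det J_T)^{-1} J_T \tilde{\vec{v}}\circ F^{-1}$, a derivative of $\vec v$ in a physical direction is a combination of derivatives of $\tilde{\vec v}$ in reference directions weighted by entries of $J_T^{-1}$, and one must arrange the bookkeeping so that a reference-coordinate derivative $\partial_{\tilde x_i}\tilde{\vec v}$ scaled by the reference size $\tilde h_i$ maps back to exactly $h_{T,i}\,\partial\vec v/\partial\vec l_{T,i}$ up to a $C(\bar c)$ factor, and that the divergence term $h_T\norm{\nabla\cdot\vec v}_{0,T}$ — which transforms cleanly, since the Piola transform sends $\nabla\cdot\vec v$ to $(\det J_T)^{-1}\tilde\nabla\cdot\tilde{\vec v}$ — is kept separate rather than absorbed into the gradient (this is essential because $\nabla\cdot\vec v$ is not controlled by the anisotropic directional derivatives with the favorable powers of $h_{T,i}$, only with the worst one). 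Here I would lean directly on the fact that \cite{AcostaApelDuranLombardi2011,ApelKempf2019} already prove the estimate in exactly this split form on the reference families, so the work is confined to verifying that the affine transformation preserves the form with constants depending only on $\bar c$; the first estimate is then just the coarser consequence obtained by bounding $h_{T,i}\le h_T$ and $\norm{\partial\vec v/\partial\vec l_{T,i}}_{0,T}\le C\norm{\nabla\vec v}_{0,T}$, together with $\norm{\nabla\cdot\vec v}_{0,T}\le\sqrt d\,\norm{\nabla\vec v}_{0,T}$, and then noting that it in fact holds under $\MAC(\bar\phi)$ alone by also using the $\mathcal{F}_2$ reference family where the regular vertex property fails.
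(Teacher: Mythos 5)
Your overall strategy is exactly what stands behind the paper's proof, which consists of citing \cite{AcostaApelDuranLombardi2011} for $\rti$ and \cite{ApelKempf2019} for $\bdmih$ (where the reference-family, affine-map and Piola bookkeeping you describe is carried out), together with the remark that the extension from $\vec{X}$ to $\vec{X}_h$ follows \cite[Lemma 3.3]{LinkeMerdonNeilanNeumann2018}. The separation of the divergence term, the recovery of the crude $\MAC(\bar\phi)$ bound from the sharp $\RVP(\bar c)$ bound together with the family $\mathcal{F}_2$, and the summation of elementwise bounds are all consistent with the cited proofs.

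The one step that does not hold as stated is your reduction for $\vec{v}_h\in\vec{X}_h$. You argue that the jump across an interior facet vanishes at $\vec{x}_F$ and has mean zero on $F$, ``so it is annihilated by the lowest-order facet moments,'' and conclude that one may thereafter treat $\vec{v}_h$ elementwise as an $\vec{H}^1$ function. This is correct for $\rti$, whose degree of freedom on $F$ is the mean of the normal trace: the jump of a Crouzeix--Raviart function is affine on $F$ and vanishes at the barycenter, hence has zero mean, so $\rti$ gives the same facet value for either one-sided trace. It is \emph{not} correct for $\bdmih$, whose degrees of freedom are moments against all of $P_1(F)$: the first-order moments of that affine jump do not vanish in general, which is precisely why the operator is defined through $\avg{\vec{v}\cdot\vec{n}_F}$ in the first place. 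Consequently $(\bdmih\vec{v}_h)|_T$ is not the single-element BDM interpolant of $\vec{v}_h|_T$; the difference is the BDM function on $T$ determined by the moments $\tfrac12\int_F\jump{\vec{v}_h\cdot\vec{n}_F}\,p_h$, and bounding its $L^2(T)$ norm by $Ch\norm{\nabla_h\vec{v}_h}_{0,\omega_T}$ requires the separate jump-control argument of \cite[Lemma 3.3]{LinkeMerdonNeilanNeumann2018}, whose constants must moreover be checked to depend only on $\bar\phi$ resp.\ $\bar c$ on anisotropic elements (this is what the paper's phrase ``can be transferred to our setting'' refers to). As written, your reduction would silently drop that contribution for the Brezzi--Douglas--Marini case.
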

\begin{proof}
	The proof can be found for the Raviart--Thomas interpolation in \cite{AcostaApelDuranLombardi2011} and for the Brezzi--Douglas--Marini interpolation in \cite{ApelKempf2019}. 
	For functions from $\vec{X}$, the slightly different definitions of the operator $\bdmih$ and the interpolation operator used in \cite{ApelKempf2019} are equivalent. For functions from $\vec{X}_h$, the interpolation error estimates can be extended. A proof for the isotropic case is given in \cite[Lemma 3.3]{LinkeMerdonNeilanNeumann2018}, which can be transfered to our setting.
\end{proof}

The following technical lemma prepares the estimate of the consistency error. The proof is analogous to \cite[Lemma 5]{Linke2014}, where we now use the interpolation error estimates from \Cref{lem:rtinterpolation}.
\begin{lemma}\label{lem:errorestimate1}
	Let $\vec{v}\in \vec{X} \cap \vec{H}^2(\Omega)$ and $\vec{w}\in \vec{X} \oplus \vec{X}_h$, then the estimate
	\begin{equation*}
		\abs{\int_\Omega \left[\nabla_h \vec{v} : \nabla_h \vec{w} + \Delta \vec{v} \cdot \hdivi \vec{w}\right]  } \leq Ch \abs{\vec{v}}_2 \norm{\vec{w}}_{1,h}
	\end{equation*}
	holds. If additionally every element $T \in \mathcal{T}_h$ satisfies $\RVP(\bar{c})$, then using the notation of \Cref{lem:rtinterpolation} we get the estimate
	\begin{multline*}
		\abs{\int_\Omega \left[\nabla_h \vec{v} : \nabla_h \vec{w} + \Delta \vec{v} \cdot \hdivi \vec{w}\right]  } \leq \\
		C \norm{\vec{w}}_{1,h} \left(h\norm{\Delta\vec{v}}_0 + \sum_{T\in\mathcal{T}_h} \sum_{i=1}^d \sum_{j=1}^d h_{T,j} \norm{\pdv{\nabla v_i}{\vec{l}_{T,j}}}_{0,T}\right).
	\end{multline*}
\end{lemma}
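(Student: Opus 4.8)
The plan is to exploit the key structural identity behind the reconstruction approach: since $\hdivi\vec{w}$ has the same normal traces (at barycenters, in the relevant moments) as $\vec{w}$, an elementwise integration by parts produces a bulk term that combines with $\nabla_h\vec{v}:\nabla_h\vec{w}$ and leaves only facet contributions that can be rewritten using the interpolation error $\vec{w}-\hdivi\vec{w}$. Concretely, first I would integrate $\int_T \Delta\vec{v}\cdot\hdivi\vec{w}$ by parts on each $T$, getting $-\int_T \nabla\vec{v}:\nabla(\hdivi\vec{w}) + \int_{\partial T}(\nabla\vec{v}\,\vec{n}_{F_T})\cdot\hdivi\vec{w}$. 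Adding $\int_T \nabla\vec{v}:\nabla_h\vec{w}$ and noting that $\nabla\vec{v}$ is constant on each $T$ only in the lowest-order case — so more carefully one writes $\int_T \nabla\vec{v}:\nabla(\vec{w}-\hdivi\vec{w})$ for the bulk and a boundary term $\int_{\partial T}(\nabla\vec{v}\,\vec{n}_{F_T})\cdot\hdivi\vec{w}$. Then the facet terms are handled exactly as in \cite[Lemma 5]{Linke2014}: on interior facets the jump of $\nabla\vec{v}$ vanishes (since $\vec{v}\in\vec{H}^2$), so one may subtract the average $\avg{\nabla\vec{v}\,\vec{n}_F}$ freely and use that $\jump{\hdivi\vec{w}\cdot\vec{n}_F}=0$ together with the fact that $\avg{(\nabla\vec{v}\,\vec{n}_F)}$ may be replaced by its facet-mean, turning each facet integral into one involving $\int_F(\vec{w}-\hdivi\vec{w})\cdot\vec{n}_F$ and the oscillation of $\nabla\vec{v}$ on $F$; on boundary facets $\hdivi\vec{w}\cdot\vec{n}=0$ by the design of $\bdmih$ (and $\rti$).

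Second, having reduced everything to $\bigl|\sum_T \int_T \nabla\vec{v}:\nabla(\vec{w}-\hdivi\vec{w})\bigr|$ plus controlled facet remainders, I would integrate by parts back the other way, or — cleaner — observe that after the manipulations the whole expression equals $\sum_T \int_T (\vec{w}-\hdivi\vec{w})\cdot\Delta\vec{v}$ up to terms bounded by $\sum_T (\text{interpolation error})\cdot(\text{oscillation of }\nabla\vec{v})$. In either route the final bound is Cauchy--Schwarz: $\sum_T \norm{\vec{w}-\hdivi\vec{w}}_{0,T}\,\norm{\Delta\vec{v}}_{0,T}$ and analogous facet sums. For the first (maximum-angle) estimate I then invoke the global bound $\norm{\vec{w}-\hdivi\vec{w}}_0\le Ch\norm{\vec{w}}_{1,h}$ from \Cref{lem:rtinterpolation} together with $\norm{\Delta\vec{v}}_0\le\sqrt d\,\abs{\vec{v}}_2$. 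For the sharper estimate I instead apply the anisotropic local bound $\norm{\vec{w}-\hdivi\vec{w}}_{0,T}\le C\bigl(h_T\norm{\nabla\cdot\vec{w}}_{0,T}+\sum_i h_{T,i}\norm{\pdv{\vec{w}}{\vec{l}_{T,i}}}_{0,T}\bigr)$; but this cannot be paired naively with $\norm{\Delta\vec{v}}_{0,T}$, because that would lose the anisotropic scaling in $\vec{v}$. The trick — and this is the step I expect to be the main obstacle — is to move the directional derivatives: one rewrites the $\Delta\vec{v}$ term componentwise and transfers one derivative onto $\hdivi\vec{w}$ (again by elementwise integration by parts) so that the resulting bulk contribution pairs $\partial_{\vec{l}_{T,j}}(\hdivi\vec{w})$-type quantities, whose $L^2$ norms are controlled via \Cref{lem:rtinterpolation}-type stability, against $\partial_{\vec{l}_{T,j}}\nabla v_i$, yielding exactly $\sum_T\sum_{i,j} h_{T,j}\norm{\pdv{\nabla v_i}{\vec{l}_{T,j}}}_{0,T}$ after Cauchy--Schwarz; the $h\norm{\Delta\vec{v}}_0$ term absorbs the divergence part and the leftover isotropic remainders.

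The delicate points I would be most careful about: (i) making sure the boundary-facet contributions genuinely vanish, which relies on the nonstandard definition of $\bdmih$ on $\boundaryFaces$ quoted in the excerpt (and is automatic for $\rti$) — this is why the lemma requires $\vec{v}\in\vec{X}$, i.e. homogeneous Dirichlet data so that $\nabla\vec{v}\,\vec{n}$ has no obstruction, but actually here it is $\hdivi\vec{w}\cdot\vec{n}=0$ that matters and for $\vec{w}\in\vec{X}_h$ this holds by construction; (ii) for $\vec{w}\in\vec{X}_h$ rather than $\vec{X}$, the interpolation estimates of \Cref{lem:rtinterpolation} already cover $\vec{X}\oplus\vec{X}_h$, so no extra work is needed there, but the elementwise integration by parts must be read in the broken sense throughout and the jump terms of $\vec{w}$ itself (nonzero for $\vec{w}\in\vec{X}_h$) must be tracked — they are, however, killed against the facet-mean of $\nabla\vec{v}\,\vec{n}_F$ because $\int_F\jump{\vec{w}}=0$ by the Crouzeix--Raviart moment condition; (iii) in the anisotropic estimate, controlling $\norm{\partial_{\vec{l}_{T,j}}\hdivi\vec{w}}_{0,T}$ requires either an inverse-type estimate on the low-order $\vec{H}(\div)$ space with anisotropic constants or a direct argument on the reference families $\mathcal F_1$, $\mathcal F_2$ — I would reduce to the reference configuration via the affine map with $\norm{J_T}_\infty,\norm{J_T^{-1}}_\infty\le C$ from \Cref{subsec:aniso_meshes}, which is precisely the mechanism that keeps the constants dependent only on $\bar c$.
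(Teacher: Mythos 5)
There is a genuine gap, and it sits exactly where you predicted: your integration by parts moves the derivative onto the \emph{reconstructed test function}, whereas the estimate can only be closed by moving the interpolation onto the \emph{smooth trial function}. Concretely, your bulk term $\int_T\nabla\vec{v}:\nabla(\vec{w}-\hdivi\vec{w})$ carries no power of $h$: the $\vec{H}(\operatorname{div})$ interpolation approximates only in $L^2$, not in $H^1$ (for $\vec{w}\in\vec{X}_h$ both $\vec{w}|_T$ and $\hdivi\vec{w}|_T$ are in $\vec{P}_1$ and their gradients differ by $O(\norm{\vec{w}}_{1,h})$), so pairing it with $\nabla\vec{v}$ gives an $O(1)$ bound. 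Integrating back produces facet terms $\int_F(\nabla\vec{v}\,\vec{n}_F)\cdot\jump{\vec{w}-\hdivi\vec{w}}_F$, and here the moment conditions do not help as you hope: they control only the \emph{normal component} of $\hdivi\vec{w}$ on $F$, while the integrand is a full inner product of two vectors, and the tangential trace of $\hdivi\vec{w}$ jumps across interior facets with no moment control at all. Estimating the remainder via the ``oscillation of $\nabla\vec{v}$ on $F$'' requires trace inequalities whose constants degenerate with the aspect ratio on anisotropic elements, and the inverse-type estimate you propose for $\norm{\partial_{\vec{l}_{T,j}}\hdivi\vec{w}}_{0,T}$ costs exactly the factor $h_{T,j}^{-1}$ you are trying to gain, so that route cannot produce the claimed anisotropic bound.

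The paper's proof avoids all facet estimates. It first splits off $\int_\Omega\Delta\vec{v}\cdot(\hdivi\vec{w}-\vec{w})$, which is the \emph{only} place the interpolation error of $\vec{w}$ enters, bounded by the $L^2$ estimate of \Cref{lem:rtinterpolation}. The remaining term $\int_\Omega[\nabla_h\vec{v}:\nabla_h\vec{w}+\Delta\vec{v}\cdot\vec{w}]$ is rewritten by Green's identity as $\sum_T\int_{\partial T}(\nabla\vec{v}\cdot\vec{n})\cdot\vec{w}$, and the key device (due to Acosta--Dur\'an) is to subtract $\Rti\nabla\vec{v}$, the row-wise Raviart--Thomas interpolant of the \emph{gradient of} $\vec{v}$: since $(\Rti\nabla\vec{v})\cdot\vec{n}$ is facewise constant and single-valued, $\sum_T\int_{\partial T}(\Rti\nabla\vec{v}\cdot\vec{n})\cdot\vec{w}=0$ follows from the Crouzeix--Raviart facet-mean condition alone, with no trace inequality. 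The divergence theorem then converts everything into volume terms: $\nabla\cdot(\nabla\vec{v}-\Rti\nabla\vec{v})=\Delta\vec{v}-\Pi_h\Delta\vec{v}$ (by \Cref{lem:commutative}), paired with $\vec{w}-\Pi_h\vec{w}$, and $(\nabla\vec{v}-\Rti\nabla\vec{v}):\nabla_h\vec{w}$, where \Cref{lem:rtinterpolation} applied to the rows $\nabla v_i$ delivers precisely $h_T\norm{\Delta v_i}_{0,T}+\sum_j h_{T,j}\norm{\pdv{\nabla v_i}{\vec{l}_{T,j}}}_{0,T}$. That is the idea your proposal is missing: the anisotropic directional derivatives of $\nabla v_i$ must come from interpolating $\nabla\vec{v}$, not from manipulating $\hdivi\vec{w}$.
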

\begin{proof}
	Using the triangle inequality we get the estimate
	\begin{multline}\label{eq:errorestimate1_1}
		\abs{\int_\Omega \left[\nabla_h \vec{v} : \nabla_h \vec{w} + \Delta \vec{v} \cdot \hdivi \vec{w}\right]  } \leq \abs{\int_\Omega \left[\nabla_h \vec{v} : \nabla_h \vec{w} + \Delta \vec{v} \cdot \vec{w}\right]  } \\ 
		+ \abs{\int_\Omega \Delta \vec{v} \cdot \left( \hdivi \vec{w} - \vec{w} \right)  }.
	\end{multline}
	The second term can be estimated using the Cauchy-Schwarz inequality and \Cref{lem:rtinterpolation} and we get the result
	\begin{equation*}
	\begin{split}
		\abs{\int_\Omega \Delta \vec{v} \cdot \left( \hdivi \vec{w} - \vec{w} \right)  } &\leq \norm{\Delta \vec{v}}_0 \norm{\hdivi \vec{w} - \vec{w}}_0 \\
		&\leq Ch \norm{\Delta \vec{v}}_0 \norm{\vec{w}}_{1,h} \leq Ch \abs{\vec{v}}_2 \norm{\vec{w}}_{1,h}.
	\end{split}
	\end{equation*}
	Using Green's identity we get a new representation of the first term on the right hand side of \eqref{eq:errorestimate1_1}:
	\begin{equation}\label{eq:errorestimate1_2}
		\abs{\int_\Omega \left[\nabla_h \vec{v} : \nabla_h \vec{w} + \Delta \vec{v} \cdot \vec{w}\right]  } = \abs{\sum_{T\in \mathcal{T}_h} \int_{\partial T} (\nabla\vec{v} \cdot \vec{n}) \cdot \vec{w}  }.
	\end{equation}
	Recall that we use the symbols $\Rti$ and $\Pi_h$ to indicate the row-by-row application of the Raviart--Thomas interpolation and the $L^2$ projection into the discrete pressure space on matrices and vectors, respectively.
	As described in the proof of \cite[Lemma 3.1]{AcostaDuran1999}, we observe that $(\Rti \nabla \vec{v})\cdot \vec{n}$ is constant on all faces and continuous across the interelement boundaries, and $\vec{v}$ vanishes at the boundary, so that we get 
	\begin{equation*}
		\sum_{T\in \mathcal{T}_h} \int_{\partial T} (\Rti\nabla\vec{v} \cdot \vec{n}) \cdot \vec{w}    = 0.
	\end{equation*}
	for all $\vec{w}\in \vec{X}\oplus \vec{X}_h$. Thus we can subtract this term from the right hand side of \eqref{eq:errorestimate1_2}, and using the divergence theorem we get
	\begin{align}
		&\left|\sum_{T\in \mathcal{T}_h} \int_{\partial T} (\nabla\vec{v} \cdot \vec{n}) \cdot \vec{w}    \right| = \abs{\sum_{T\in \mathcal{T}_h} \int_{\partial T} ((\nabla\vec{v} - \Rti \nabla\vec{v}) \cdot \vec{n}) \cdot \vec{w}   }	\nonumber\\
		&\quad= \abs{\sum_{T\in \mathcal{T}_h} \int_{T} \nabla \cdot ((\nabla\vec{v} - \Rti \nabla\vec{v}) \cdot \vec{w})   } \nonumber\\
		&\quad= \abs{\sum_{T\in \mathcal{T}_h} \int_{T}\left[ (\nabla \cdot (\nabla\vec{v} - \Rti \nabla\vec{v})) \cdot \vec{w} + (\nabla\vec{v} - \Rti\nabla\vec{v}):\nabla\vec{w}\right]  } \nonumber\\
		&\quad\leq \abs{\sum_{T\in \mathcal{T}_h} \int_{T}(\nabla \cdot (\nabla\vec{v} - \Rti \nabla\vec{v})) \cdot \vec{w}  } + \abs{\sum_{T\in \mathcal{T}_h} \int_{T}(\nabla\vec{v} - \Rti\nabla\vec{v}):\nabla\vec{w}   }.\label{eq:errorestimate1_3}
	\end{align}
	For the first term on the right hand side, observe that due to \Cref{lem:commutative} we have
	\begin{equation*}
		\nabla\cdot\Rti\nabla\vec{v}=\Pi_h(\nabla\cdot\nabla\vec{v}) = \Pi_h\Delta\vec{v},
	\end{equation*}
	thus using $L^2$ orthogonality we can estimate
	\begin{multline*}
		\abs{\sum_{T\in \mathcal{T}_h} \int_{T}(\nabla \cdot (\nabla\vec{v} - \Rti \nabla\vec{v})) \cdot \vec{w}  } = \abs{\sum_{T\in \mathcal{T}_h} \int_{T}(\Delta\vec{v} - \Pi_h \Delta\vec{v}) \cdot \vec{w}  } \\
		= \abs{\sum_{T\in \mathcal{T}_h} \int_{T}\Delta\vec{v} \cdot (\vec{w} - \Pi_h\vec{w})  } \leq Ch \norm{\Delta \vec{v}}_0\norm{\vec{w}}_{1,h} \leq Ch  \abs{\vec{v}}_2\norm{\vec{w}}_{1,h}.
	\end{multline*}
	Using the Cauchy-Schwarz inequality, the interpolation estimates from \Cref{lem:rtinterpolation} and some basic calculations, the second term on the right hand side of \eqref{eq:errorestimate1_3} can be estimated by
	\begin{align*}
		&\hspace{-1cm}\abs{\sum_{T\in \mathcal{T}_h} \int_{T}(\nabla\vec{v} - \Rti\nabla\vec{v}):\nabla\vec{w}   } \leq \sum_{T\in\mathcal{T}_h}\norm{\nabla\vec{v}-\Rti\nabla\vec{v}}_{0,T} \norm{\nabla_h\vec{w}}_0  \\
		&\leq \sum_{T\in\mathcal{T}_h}\left[\sum_{i=1}^d \norm{\nabla v_i-\rti\nabla v_i}^2_{0,T}\right]^{\nicefrac{1}{2}} \norm{\vec{w}}_{1,h} \\
		&\leq C \sum_{T\in\mathcal{T}_h}\left[\sum_{i=1}^d \left(h_T \norm{\Delta v_i}_{0,T} + \sum_{j=1}^d h_{T,j}\norm{\pdv{\nabla v_i}{\vec{l}_{T,j}}}_{0,T} \right)^2 \right]^{\nicefrac{1}{2}} \norm{\vec{w}}_{1,h} \\
		&\leq C \norm{\vec{w}}_{1,h} \left(h\norm{\Delta\vec{v}}_0 + \sum_{T\in\mathcal{T}_h} \sum_{i=1}^d \sum_{j=1}^d h_{T,j} \norm{\pdv{\nabla v_i}{\vec{l}_{T,j}}}_{0,T}\right) \leq C h \abs{\vec{v}}_2 \norm{\vec{w}}_{1,h}.
	\end{align*}
	Combining the individual estimates we get the statement of the lemma.
\end{proof}

\begin{lemma}\label{lem:consistencyerror}
	Let $(\vec{u},p)\in \vec{H}^2(\Omega) \times H^1(\Omega)$ hold for the solution $(\vec{u}, p)$ of \eqref{eq:stokescont}. Then the estimate
	\begin{equation*}
		\frac{1}{\nu}\sup_{\vec{w}\in\vec{V}^0\oplus \vec{V}^0_h} \frac{\abs{a_h(\vec{u},\vec{w}) - l_h(\vec{w})}}{\norm{\vec{w}}_{1,h}} \leq Ch \abs{\vec{u}}_2
	\end{equation*}
	holds. If additionally every element $T \in \mathcal{T}_h$ satisfies $\RVP(\bar{c})$, then using the notation of \Cref{lem:rtinterpolation} we have the estimate
	\begin{equation*}
		\frac{1}{\nu}\sup_{\vec{w}\in\vec{V}^0\oplus \vec{V}^0_h} \frac{\abs{a_h(\vec{u},\vec{w}) - l_h(\vec{w})}}{\norm{\vec{w}}_{1,h}} \leq C \left(h\norm{\Delta\vec{u}}_0 + \sum_{T\in\mathcal{T}_h} \sum_{i=1}^d \sum_{j=1}^d h_{T,j} \norm{\pdv{\nabla u_i}{\vec{l}_{T,j}}}_{0,T}\right).
	\end{equation*}
\end{lemma}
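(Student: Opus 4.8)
The plan is to reduce the claim to \Cref{lem:errorestimate1} by exploiting the pressure-robustness built into the linear form $l_h$. First I would insert the momentum equation $\vec{f} = -\nu\Delta\vec{u}+\nabla p$ from \eqref{eq:stokescont}, which is admissible since $\vec{u}\in\vec{H}^2(\Omega)$ and $p\in H^1(\Omega)$, into the definition of $l_h$, and use $a_h(\vec{u},\vec{w}) = \nu\int_\Omega\nabla_h\vec{u}:\nabla_h\vec{w}$. For every $\vec{w}\in\vec{V}^0\oplus\vec{V}^0_h$ this gives
	\begin{equation*}
		\frac{1}{\nu}\bigl(a_h(\vec{u},\vec{w}) - l_h(\vec{w})\bigr) = \int_\Omega\bigl[\nabla_h\vec{u}:\nabla_h\vec{w} + \Delta\vec{u}\cdot\hdivi\vec{w}\bigr] - \frac{1}{\nu}\int_\Omega\nabla p\cdot\hdivi\vec{w}.
	\end{equation*}

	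The key step is to show that the pressure term $\int_\Omega\nabla p\cdot\hdivi\vec{w}$ vanishes for all $\vec{w}\in\vec{V}^0\oplus\vec{V}^0_h$. Since $\hdivi\vec{w}\in\Hdiv$, the Gauss--Green formula gives $\int_\Omega\nabla p\cdot\hdivi\vec{w} = -\int_\Omega p\,\nabla\cdot\hdivi\vec{w} + \int_{\partial\Omega}p\,(\hdivi\vec{w})\cdot\vec{n}$. The volume term vanishes because, by \Cref{lem:commutative} (whose elementwise proof applies verbatim to functions in $\vec{X}_h$), $\nabla\cdot\hdivi\vec{w} = \pi_h(\nabla_h\cdot\vec{w})$, which is zero for $\vec{w}\in\vec{V}^0$ since $\nabla\cdot\vec{w}=0$ and for $\vec{w}\in\vec{V}^0_h$ since $\nabla_h\cdot\vec{w}$ is elementwise constant with $(\nabla_h\cdot\vec{w},q_h)=0$ for all $q_h\in Q_h$. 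For the boundary term I would check that the normal trace of $\hdivi\vec{w}$ vanishes on $\partial\Omega$: for $\vec{w}\in\vec{V}^0\subset\vec{H}^1_0(\Omega)$ this is immediate from $\vec{w}|_{\partial\Omega}=0$; for $\vec{w}\in\vec{V}^0_h\subset\vec{X}_h$ the restriction of $\vec{w}$ to a boundary facet $F$ is affine and vanishes at $\vec{x}_F$, hence $\frac{1}{\abs{F}}\int_F\vec{w}\cdot\vec{n}_F = 0$, so $(\rti\vec{w})\cdot\vec{n}_F|_F = 0$, and by the defining relation of $\bdmih$ on boundary facets also $(\bdmih\vec{w})\cdot\vec{n}_F|_F = 0$. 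Thus the pressure term drops out entirely, and this is exactly the structural identity underlying pressure-robustness.

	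It then remains to bound the first term, which is precisely the quantity estimated in \Cref{lem:errorestimate1} with $\vec{v}=\vec{u}$: under $\MAC(\bar{\phi})$ one obtains $\bigl|\int_\Omega[\nabla_h\vec{u}:\nabla_h\vec{w}+\Delta\vec{u}\cdot\hdivi\vec{w}]\bigr|\leq Ch\abs{\vec{u}}_2\norm{\vec{w}}_{1,h}$, and under the additional assumption $\RVP(\bar{c})$ the sharpened anisotropic bound involving the edge-direction derivatives of $\nabla\vec{u}$. Dividing by $\norm{\vec{w}}_{1,h}$ and taking the supremum over $\vec{w}\in\vec{V}^0\oplus\vec{V}^0_h$ yields both assertions. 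I expect the only genuine subtlety to be the verification that $\hdivi\vec{w}$ is exactly divergence-free with vanishing normal trace on both summands of $\vec{V}^0\oplus\vec{V}^0_h$; once that is established, the rest is substitution and an invocation of the preceding lemma.
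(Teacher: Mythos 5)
Your proposal is correct and follows essentially the same route as the paper's proof: substitute the momentum equation into $l_h$, show $\int_\Omega \nabla p\cdot\hdivi\vec{w}=0$ by integration by parts using the divergence-free property and vanishing normal trace of $\hdivi\vec{w}$, and then invoke \Cref{lem:errorestimate1} with $\vec{v}=\vec{u}$. The only difference is that you spell out the verification of the volume and boundary terms that the paper dismisses in one line (``due to the choice of $\vec{w}$ and the boundary conditions in the spaces $\BDM$ and $\RT$''), and your justifications there are accurate.
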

\begin{proof}
	Let $0\neq \vec{w} \in \vec{V}^0\oplus\vec{V}^0_h$. Using partial integration yields
	\begin{align*}
		(\nabla p, \hdivi \vec{w}) = -(p, \nabla\cdot \hdivi \vec{w}) + (p,\hdivi \vec{w}\cdot \vec{n})_{\partial \Omega} = 0,
	\end{align*}	
	due to the choice of $\vec{w}$ and the boundary conditions in the spaces $\BDM$ and $\RT$.
	With this equality we get
	\begin{align*}
		\frac{1}{\nu} \abs{a_h(\vec{u},\vec{w}) - l_h(\vec{w})} &= \frac{1}{\nu} \abs{\int_\Omega \left[\nu\nabla_h\vec{u} : \nabla_h \vec{w} - \vec{f} \cdot \hdivi\vec{w}\right]   }	\\
		&= \frac{1}{\nu} \abs{\int_\Omega \left[\nu\nabla_h\vec{u} : \nabla_h \vec{w} +(\nu\Delta\vec{u}-\nabla p) \cdot \hdivi\vec{w}\right]   } \\
		&= \abs{\int_\Omega \left[\nabla_h\vec{u} : \nabla_h \vec{w} + \Delta\vec{u} \cdot \hdivi\vec{w}\right]   }.
	\end{align*}
	Now using the two results from \Cref{lem:errorestimate1} yields the statement of the lemma.
\end{proof}

\begin{theorem}
	Let $(\vec{u},p)\in \vec{H}^2(\Omega) \times H^1(\Omega)$ hold for the solution $(\vec{u}, p)$ of \eqref{eq:stokescont}, and let $(\vec{u}_h, p_h)$ be the discrete solution of \eqref{eq:stokesweak}. Then the estimates
	\begin{align}
		&\norm{\vec{u}-\vec{u}_h}_{1,h} \leq 2 \inf_{\vec{v}_h\in\vec{V}^0_h} \norm{\vec{u} - \vec{v}_h}_{1,h} + Ch\abs{\vec{u}}_2,	\label{eq:errorestimateu}\\
		&\norm{\pi_h p - p_h}_0 \leq \frac{\nu}{\widetilde{\beta}}\left( 2 \inf_{\vec{v}_h\in\vec{V}^0_h} \norm{\vec{u} - \vec{v}_h}_{1,h} + C h \abs{\vec{u}}_2\right), \label{eq:errorestimatepbestapprox}\\
		&\norm{p-p_h}_0 \leq \inf_{q_h\in Q_h} \norm{p-q_h}_0 + \frac{\nu}{\widetilde{\beta}}\left( 2 \inf_{\vec{v}_h\in\vec{V}^0_h} \norm{\vec{u} - \vec{v}_h}_{1,h} + C h \abs{\vec{u}}_2\right), \label{eq:errorestimatep}
	\end{align}
	hold. If additionally every element $T \in \mathcal{T}_h$ satisfies $\RVP(\bar{c})$, then using the notation of \Cref{lem:rtinterpolation} we get the estimates
	\begin{align}
		&\norm{\vec{u}-\vec{u}_h}_{1,h} \leq 2 \inf_{\vec{v}_h\in\vec{V}^0_h} \norm{\vec{u} - \vec{v}_h}_{1,h} \nonumber\\ 
		&\hspace{5cm} + C \left(h\norm{\Delta\vec{u}}_0 + \sum_{T\in\mathcal{T}_h} \sum_{i=1}^d \sum_{j=1}^d h_{T,j} \norm{\pdv{\nabla u_i}{\vec{l}_{T,j}}}_{0,T}\right),	\label{eq:errorestimateu2}\\
		&\norm{\pi_h p - p_h}_0 \leq \frac{\nu}{\widetilde{\beta}}\left[ 2 \inf_{\vec{v}_h\in\vec{V}^0_h} \norm{\vec{u} - \vec{v}_h}_{1,h}  \right.\nonumber \\
		&\hspace{5cm} +\left.  C \left(h\norm{\Delta\vec{u}}_0 + \sum_{T\in\mathcal{T}_h} \sum_{i=1}^d \sum_{j=1}^d h_{T,j} \norm{\pdv{\nabla u_i}{\vec{l}_{T,j}}}_{0,T}\right) \right], \label{eq:errorestimatepbestapprox2}\\
		&\norm{p-p_h}_0 \leq \inf_{q_h\in Q_h} \norm{p-q_h}_0 + \frac{\nu}{\widetilde{\beta}}\left[ 2 \inf_{\vec{v}_h\in\vec{V}^0_h} \norm{\vec{u} - \vec{v}_h}_{1,h}  \right.\nonumber \\
		&\hspace{5cm} +\left.  C \left(h\norm{\Delta\vec{u}}_0 + \sum_{T\in\mathcal{T}_h} \sum_{i=1}^d \sum_{j=1}^d h_{T,j} \norm{\pdv{\nabla u_i}{\vec{l}_{T,j}}}_{0,T}\right) \right]. \label{eq:errorestimatep2}
	\end{align}
\end{theorem}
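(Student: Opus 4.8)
The plan is to follow the classical Strang-type argument for the elliptic formulation \eqref{eq:stokeselliptic}, splitting the velocity error into an approximation part and a consistency part, and then to recover the pressure error via the discrete inf-sup condition from \Cref{lem:infsup}. All the heavy lifting on the anisotropic interpolation estimates has already been packaged into \Cref{lem:consistencyerror}, so the main work here is a clean bookkeeping of these estimates through the standard abstract inequalities, keeping track of the two different right-hand sides (the plain $h\abs{\vec u}_2$ version under $\MAC(\bar\phi)$ alone, and the refined sum-of-directional-derivatives version under the additional $\RVP(\bar c)$ assumption).

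\textbf{Velocity estimate.} For \eqref{eq:errorestimateu} and \eqref{eq:errorestimateu2}: let $\vec v_h\in\vec V^0_h$ be arbitrary and write $\vec u-\vec u_h = (\vec u-\vec v_h) + (\vec v_h-\vec u_h)$. Since $\vec v_h-\vec u_h\in\vec V^0_h$, coercivity of $a_h$ on $\vec V^0_h$ (with constant $\nu$ in the $\norm{\cdot}_{1,h}$ norm, which is exact here because $a_h(\vec w,\vec w)=\nu\norm{\vec w}_{1,h}^2$) gives
\begin{equation*}
	\nu\norm{\vec v_h-\vec u_h}_{1,h}^2 = a_h(\vec v_h-\vec u_h,\vec v_h-\vec u_h) = a_h(\vec v_h-\vec u,\vec v_h-\vec u_h) + \bigl(a_h(\vec u,\vec v_h-\vec u_h) - l_h(\vec v_h-\vec u_h)\bigr),
\end{equation*}
where we used $a_h(\vec u_h,\vec w_h)=l_h(\vec w_h)$ for $\vec w_h=\vec v_h-\vec u_h\in\vec V^0_h$ from \eqref{eq:stokeselliptic}. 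The first term is bounded by $\nu\norm{\vec u-\vec v_h}_{1,h}\norm{\vec v_h-\vec u_h}_{1,h}$ via Cauchy--Schwarz, and the second term is exactly the consistency functional controlled by \Cref{lem:consistencyerror} (note $\vec v_h-\vec u_h\in\vec V^0_h\subset\vec V^0\oplus\vec V^0_h$). Dividing by $\nu\norm{\vec v_h-\vec u_h}_{1,h}$, applying the triangle inequality once more, and taking the infimum over $\vec v_h\in\vec V^0_h$ yields the claimed bounds, with the consistency term supplying either $Ch\abs{\vec u}_2$ or the directional-derivative sum depending on which case of \Cref{lem:consistencyerror} is invoked.

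\textbf{Pressure estimates.} For \eqref{eq:errorestimatepbestapprox} and \eqref{eq:errorestimatepbestapprox2}: by the discrete inf-sup condition of \Cref{lem:infsup}, for $\pi_h p-p_h\in Q_h$ there exists $\vec w_h\in\vec X_h$ with
\begin{equation*}
	\widetilde\beta\norm{\pi_h p-p_h}_0 \leq \frac{b_h(\vec w_h,\pi_h p-p_h)}{\norm{\vec w_h}_{1,h}}.
\end{equation*}
Expand $b_h(\vec w_h,\pi_h p-p_h) = b_h(\vec w_h,\pi_h p - p) + b_h(\vec w_h, p-p_h)$; the first summand vanishes since $\nabla_h\cdot\vec w_h\in Q_h$ and $\pi_h p-p$ is $L^2$-orthogonal to $Q_h$. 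Using the first equation of \eqref{eq:stokesweak}, $b_h(\vec w_h,p_h) = l_h(\vec w_h) - a_h(\vec u_h,\vec w_h)$, while from the continuous problem tested in the broken sense one has the analogous identity for $p$ up to the consistency discrepancy; subtracting gives $b_h(\vec w_h, p-p_h) = a_h(\vec u_h-\vec u,\vec w_h) + \bigl(a_h(\vec u,\vec w_h)-l_h(\vec w_h)\bigr)$. Bounding the first term by $\nu\norm{\vec u-\vec u_h}_{1,h}\norm{\vec w_h}_{1,h}$ and the second by \Cref{lem:consistencyerror} (times $\nu$), dividing by $\widetilde\beta\norm{\vec w_h}_{1,h}$, and inserting the velocity estimate already proved gives \eqref{eq:errorestimatepbestapprox} and \eqref{eq:errorestimatepbestapprox2}. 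Finally \eqref{eq:errorestimatep} and \eqref{eq:errorestimatep2} follow from the triangle inequality $\norm{p-p_h}_0\le\norm{p-\pi_h p}_0 + \norm{\pi_h p-p_h}_0$, together with $\norm{p-\pi_h p}_0 = \inf_{q_h\in Q_h}\norm{p-q_h}_0$ since $\pi_h$ is the $L^2$-projection.

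\textbf{Main obstacle.} There is no deep difficulty remaining; the only point requiring care is making sure the consistency functional is evaluated on test functions lying in $\vec V^0\oplus\vec V^0_h$ (for the velocity bound this is automatic, but for the pressure bound one must check that the full $\vec w_h\in\vec X_h$ appearing in the inf-sup step is still admissible — it is, because \Cref{lem:errorestimate1} and hence \Cref{lem:consistencyerror} were in fact proved for all $\vec w\in\vec X\oplus\vec X_h$, so the restriction to divergence-constrained functions in the statement of \Cref{lem:consistencyerror} can be loosened, or alternatively one splits $\vec w_h$ into its $\vec V^0_h$-part plus a complement and uses the inf-sup inequality only on the complement). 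The other mild subtlety is simply to carry the two distinct right-hand sides through all three estimates consistently, which is purely mechanical.
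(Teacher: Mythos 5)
Your velocity estimate and the final triangle-inequality step reproduce the paper's argument exactly and are fine; there the test function $\vec v_h-\vec u_h$ lies in $\vec V^0_h$, so \Cref{lem:consistencyerror} applies. The gap is in the pressure estimate. Your identity
\begin{equation*}
	b_h(\vec w_h, p-p_h) = a_h(\vec u_h-\vec u,\vec w_h) + \bigl(a_h(\vec u,\vec w_h) - l_h(\vec w_h)\bigr)
\end{equation*}
is, via the first equation of \eqref{eq:stokesweak}, equivalent to $b_h(\vec w_h,p)=0$, which fails for a general $\vec w_h\in\vec X_h$: the inf-sup test function is not discretely divergence-free, so $\int_\Omega p\,\nabla_h\cdot\vec w_h$ need not vanish. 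The correct identity carries the additional term $b_h(\vec w_h,p)$, i.e.\ the full momentum residual $a_h(\vec u,\vec w_h)+b_h(\vec w_h,p)-l_h(\vec w_h)$ appears. Your proposed repair --- that \Cref{lem:consistencyerror} ``was in fact proved for all $\vec w\in\vec X\oplus\vec X_h$'' --- is not true: its proof kills the term $(\nabla p,\hdivi\vec w)$ precisely by using $\nabla\cdot\hdivi\vec w=\pi_h(\nabla_h\cdot\vec w)=0$, i.e.\ the divergence constraint on $\vec w$. For a general $\vec w_h$ that term is not small relative to $\nu$ and cannot simply be dropped. Your alternative repair (splitting $\vec w_h$ into a $\vec V^0_h$-part and a complement) does not close the gap either, since the consistency functional still has to be evaluated on the complement, which again is not in $\vec V^0_h$.

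The two defects cancel when handled together, and that is exactly what the paper does: it keeps $b_h(\vec w_h,p)$, substitutes $\vec f=-\nu\Delta\vec u+\nabla p$, and uses \Cref{lem:commutative} to show
\begin{equation*}
	\int_\Omega \bigl[-p\,\nabla_h\cdot\vec w_h - \nabla p\cdot\hdivi\vec w_h\bigr] = 0
	\quad\text{for all }\vec w_h\in\vec X_h,
\end{equation*}
which reduces the residual to $\nu\int_\Omega\bigl[\nabla_h\vec u:\nabla_h\vec w_h+\Delta\vec u\cdot\hdivi\vec w_h\bigr]$. This quantity is then bounded by \Cref{lem:errorestimate1}, which --- unlike \Cref{lem:consistencyerror} --- really is stated and proved for all $\vec w\in\vec X\oplus\vec X_h$. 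Perform this cancellation explicitly and cite \Cref{lem:errorestimate1} instead of \Cref{lem:consistencyerror} at that point; with that change your argument coincides with the paper's and all six estimates follow as you describe.
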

\begin{proof}
	Let $\vec{w}_h = \vec{u}_h - \vec{v}_h\in\vec{V}^0_h$ for arbitrary $\vec{v}_h\in\vec{V}^0_h$, then using \eqref{eq:stokeselliptic} we get
	\begin{align*}
		\nu \norm{\vec{w}_h}^2_{1,h} &= a_h(\vec{w}_h,\vec{w}_h) = a_h(\vec{u}_h - \vec{v}_h,\vec{w}_h) \\
		&= a_h(\vec{u} - \vec{v}_h,\vec{w}_h) + a_h(\vec{u}_h,\vec{w}_h) - a_h(\vec{u},\vec{w}_h) \\
		&= a_h(\vec{u} - \vec{v}_h,\vec{w}_h) + l_h(\vec{w}_h) - a_h(\vec{u},\vec{w}_h) \\
		&\leq \nu\norm{\vec{u}-\vec{v}_h}_{1,h} \norm{\vec{w}_h}_{1,h} + \abs{a_h(\vec{u},\vec{w}_h) - l_h(\vec{w}_h)}.
	\end{align*}		
	Using the triangle inequality and the last inequality we get Strang's second lemma in the form
	\begin{align}\label{eq:strang2}
		\norm{\vec{u}-\vec{u}_h}_{1,h} &= \norm{\vec{u}-\vec{v}_h-\vec{w}_h}_{1,h} \nonumber\\
		&\leq 2\inf_{\vec{v}_h\in\vec{V}^0_h} \norm{\vec{u} - \vec{v}_h}_{1,h} + \frac{1}{\nu} \sup_{\vec{w}_h\in\vec{V}^0_h} \frac{\abs{a_h(\vec{u},\vec{w}_h) - l_h(\vec{w}_h)}}{\norm{\vec{w}_h}_{1,h}}.
	\end{align}
	Applying the bounds for the consistency error from \Cref{lem:consistencyerror} we get \eqref{eq:errorestimateu} and \eqref{eq:errorestimateu2}.
	
	Choosing $q_h = \pi_h p - p_h$ in the discrete inf-sup stability inequality from \Cref{lem:infsup} we get the estimate
	\begin{equation}\label{eq:pressure2}
		\norm{\pi_h p - p_h}_0 \leq \frac{1}{\widetilde{\beta}} \sup_{\vec{v}_h \in \vec{X}_h} \frac{b_h(\vec{v}_h,\pi_h p - p_h)}{\norm{\vec{v}_h}_{1,h}}.
	\end{equation}		
	For the numerator we get
	\begin{equation}\label{eq:pressure3}
		b_h(\vec{v}_h,\pi_h p - p_h) = b_h(\vec{v}_h,\pi_h p - p) + b_h(\vec{v}_h, p - p_h) = b_h(\vec{v}_h, p - p_h),
	\end{equation}
	where the last equality is satisfied since by \Cref{lem:commutative} 
	\begin{equation}\label{eq:pressure3.5}
		\nabla_h \cdot \vec{v}_h = \nabla_h \cdot \cri \vec{v}_h = \pi_h (\nabla \cdot \vec{v}_h) \in Q_h
	\end{equation}
	holds and $\pi_h p - p\in Q_h^\perp$. Again using \Cref{lem:commutative} and \eqref{eq:pressure3.5} we get
	\begin{align*}
		\int_\Omega \left[ -p\nabla_h\cdot \vec{v}_h - \nabla p \cdot \hdivi \vec{v}_h \right]  &= \int_\Omega \left[ -p\nabla_h\cdot \vec{v}_h +  p \nabla \cdot \hdivi \vec{v}_h \right]  	\\
		&= \int_\Omega \left[ -p\nabla_h\cdot \vec{v}_h +  p \pi_h(\nabla \cdot \vec{v}_h) \right]\\
		&= \int_\Omega \left[ -p\nabla_h\cdot \vec{v}_h +  p \nabla_h \cdot \vec{v}_h \right]   = 0,
	\end{align*}
	which we can use to simplify further and estimate
	\begin{align} 
		b_h(\vec{v}_h, p - p_h) &= b_h(\vec{v}_h,p) + a_h(\vec{u}_h,\vec{v}_h) - l_h(\vec{v}_h) \nonumber\\
		&= a_h(\vec{u}_h - \vec{u},\vec{v}_h) + \int_\Omega \left[ \nu \nabla_h \vec{u} : \nabla_h \vec{v}_h - p \nabla_h\cdot \vec{v}_h - \vec{f}\cdot \hdivi \vec{v}_h \right]   \nonumber\\
		&\leq \nu \norm{\vec{u}-\vec{u}_h}_{1,h} \norm{\vec{v}_h}_{1,h} + \nu \int_\Omega \left[ \nabla_h \vec{u} : \nabla_h \vec{v}_h + \Delta \vec{u} \cdot \hdivi \vec{v}_h \right]. \label{eq:pressure4}
	\end{align}
	Combining \eqref{eq:pressure2}, \eqref{eq:pressure3}, \eqref{eq:pressure4} we get
	\begin{equation*}
		\norm{\pi_h p - p_h}_0 \leq \frac{\nu}{\widetilde{\beta}} 		\left( \norm{\vec{u}-\vec{u}_h}_{1,h}  + \sup_{\vec{v}_h \in \vec{X}_h} \frac{\int_\Omega \left[ \nabla_h \vec{u} : \nabla_h \vec{v}_h + \Delta \vec{u} \cdot \hdivi \vec{v}_h \right]}{\norm{\vec{v}_h}_{1,h}}\right).
	\end{equation*}
	Now using \eqref{eq:errorestimateu} or \eqref{eq:errorestimateu2}, and the corresponding estimate from \Cref{lem:errorestimate1}, we get estimates \eqref{eq:errorestimatepbestapprox} and \eqref{eq:errorestimatepbestapprox2}, respectively.

	The remaining estimates \eqref{eq:errorestimatep} and \eqref{eq:errorestimatep2} follow by the triangle inequality and the observation that the $L^2$ projection is the best approximation of $p$ in $Q_h$, i.e.
	\begin{equation*}
		 \norm{p-\pi_h p}_0 = \inf_{q_h\in Q_h} \norm{p-q_h}_0. \qedhere
	\end{equation*}
\end{proof}

For a convex domain and $\hdivi = \bdmih$ we can easily get an optimal $L^2$ error estimate by some standard arguments, using another interpolation error estimate from \cite{ApelKempf2019}, which we state without proof.
\begin{lemma}
	Let $\vec{v} \in \vec{H}^2(\Omega)\cap \vec{X}$ and let $\mathcal{T}_h$ satisfy a maximum angle condition, then the estimate
	\begin{equation*}
		\norm{\vec
		{v} - \bdmih \vec{v}}_{0} \leq C h^2 \abs{\vec{v}}_{2},
	\end{equation*}
	holds.
\end{lemma}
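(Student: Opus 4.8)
The plan is to reduce the global estimate to an elementwise one and then invoke the anisotropic $L^2$-error estimate for the BDM interpolation that was already established in \cite{ApelKempf2019}. Concretely, since $\norm{\vec{v}-\bdmih\vec{v}}_0^2 = \sum_{T\in\mathcal{T}_h}\norm{\vec{v}-\bdmih\vec{v}}_{0,T}^2$ and $\bdmih$ is defined through local facet degrees of freedom, it suffices to prove $\norm{\vec{v}-\bdmih\vec{v}}_{0,T}\leq C h_T^2\,\abs{\vec{v}}_{2,T}$ on each element $T$, with $C$ depending only on the maximum-angle constant $\bar\phi$; summing and using $h_T\leq h$ then yields the claim. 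So the first step is simply to localize.

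Next I would set up the scaling argument on a single element. By the discussion in \Cref{subsec:aniso_meshes}, every $T$ satisfying $\MAC(\bar\phi)$ is the image under an affine map $F(\tilde{\vec x})=J_T\tilde{\vec x}+\vec x_0$ of a reference element drawn from the families $\mathcal{F}_1\cup\mathcal{F}_2$, where $\norm{J_T}_\infty$ and $\norm{J_T^{-1}}_\infty$ are controlled by $\bar\phi$. The BDM interpolation commutes with the Piola transform, which is the key structural fact that makes this work: if $\hat{\vec v}$ denotes the Piola pullback of $\vec v$, then the Piola pushforward of $\bdmih^{\,\text{ref}}\hat{\vec v}$ equals $\bdmih\vec v$ on $T$. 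Hence $\vec v-\bdmih\vec v$ is the Piola pushforward of $\hat{\vec v}-\bdmih^{\,\text{ref}}\hat{\vec v}$. On the reference family one has the $O(1)$ estimate $\norm{\hat{\vec v}-\bdmih^{\,\text{ref}}\hat{\vec v}}_{0,\hat T}\leq C\,\abs{\hat{\vec v}}_{2,\hat T}$ from \cite{ApelKempf2019} (or a standard Bramble--Hilbert / Deny--Lions argument, noting $\bdmih^{\,\text{ref}}$ reproduces $\vec P_1$, which is degree $\geq 1$, so second-order accuracy is available). Transferring both sides back through the Piola map, the volume Jacobian factors $|\det J_T|$ cancel in the $L^2$ norms up to powers of $\norm{J_T}_\infty,\norm{J_T^{-1}}_\infty$, and the two derivatives in $\abs{\hat{\vec v}}_{2,\hat T}$ each contribute a factor $\norm{J_T}_\infty\sim h_T$; the Piola factor $J_T/\det J_T$ on the vector field itself contributes an extra bounded factor. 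Collecting powers yields $\norm{\vec v-\bdmih\vec v}_{0,T}\leq C\,h_T^2\,\abs{\vec v}_{2,T}$ with $C=C(\bar\phi)$.

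The main obstacle — and the reason I would lean on \cite{ApelKempf2019} rather than redo it — is that the naive scaling argument is \emph{not} robust for anisotropic $T$: on a stretched element $\norm{J_T}_\infty$ and $\norm{J_T^{-1}}_\infty$ are both $O(1)$ only \emph{relative to the reference family}, not relative to a single fixed reference simplex, and a careless application of the inverse map would reintroduce the aspect ratio $\sigma_T$. The delicate point is that one must work with the specific anisotropic reference families $\mathcal{F}_1,\mathcal{F}_2$ (with their free size parameters $h_i$), for which the component-by-component / directional-derivative version of the Bramble--Hilbert estimate holds with a constant independent of the $h_i$; this is exactly the content of the anisotropic interpolation theory of \cite{AcostaApelDuranLombardi2011,ApelKempf2019}, and it is what prevents loss of a factor of $\sigma_T$. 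Granting that reference-family estimate as a black box (as the lemma statement permits, since it is quoted from \cite{ApelKempf2019}), the remaining work is the routine localization and affine bookkeeping sketched above, and the full-regularity hypothesis $\vec v\in\vec H^2(\Omega)$ together with convexity of $\Omega$ is what later guarantees $\abs{\vec u}_2\lesssim\norm{\vec f}_0$, closing the optimal $L^2$ velocity estimate.
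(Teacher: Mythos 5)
The paper states this lemma without proof, citing \cite{ApelKempf2019}, and your plan is essentially the standard argument behind that citation: localize to elements, map to the anisotropic reference families $\mathcal{F}_1\cup\mathcal{F}_2$ via the Piola transform (which commutes with the BDM degrees of freedom), and invoke the anisotropic Bramble--Hilbert estimate on those families, using that lowest-order BDM reproduces all of $\vec{P}_1$. One caution: your second paragraph attributes the two powers of $h_T$ to $\norm{J_T}_{\infty}\sim h_T$, which is the naive fixed-reference-simplex scaling that would reintroduce the aspect ratio; as your third paragraph then correctly observes, in this framework $\norm{J_T}_{\infty}$ and $\norm{J_T^{-1}}_{\infty}$ are both $O(1)$ and the factors of $h$ must come from the size parameters $h_i$ carried by the reference family inside the estimate $\norm{\hat{\vec{v}}-\hat{I}\hat{\vec{v}}}_{0,\hat{T}}\leq C\sum_{\abs{\alpha}=2}h^{\alpha}\norm{D^{\alpha}\hat{\vec{v}}}_{0,\hat{T}}$, so the two paragraphs should be reconciled in that direction.
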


\begin{theorem}\label{th:bdm_l2_estimate}
	Let $\Omega$ be convex, $\mathcal{T}_h$ satisfy a maximum angle condition, $\hdivi = \bdmih$, $(\vec{u},p)\in \vec{H}^2(\Omega) \times H^1(\Omega)$ the solution of \eqref{eq:stokescont} and $(\vec{u}_h,p_h)$ the solution of \eqref{eq:stokesweak}. Then the estimate 
	\begin{equation*}
		\norm{\vec{u}-\vec{u}_h}_{0} \leq C h^2 \abs{\vec{u}}_2
	\end{equation*}
	holds.
\end{theorem}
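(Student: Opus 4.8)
The plan is a standard Aubin--Nitsche duality argument, transplanted to the nonconforming, reconstruction-based discretization; the new ingredients over the shape-regular case are the anisotropic interpolation bounds of \Cref{lem:rtinterpolation} and the reconstruction-consistency computation of \Cref{lem:consistencyerror} (which goes back to \cite{Linke2014}). Write $\vec{e} = \vec{u}-\vec{u}_h$ and let $(\vec{\varphi},\xi)\in\vec{X}\times Q$ solve the dual Stokes problem $-\nu\Delta\vec{\varphi}+\nabla\xi = \vec{e}$, $\nabla\cdot\vec{\varphi}=0$, $\vec{\varphi}|_{\partial\Omega}=0$. Since $\Omega$ is convex, this problem enjoys full $\vec{H}^2(\Omega)\times H^1(\Omega)$ regularity and, after rescaling by $\nu$, one has $\nu\abs{\vec{\varphi}}_2 + \abs{\xi}_1 \le C\norm{\vec{e}}_0$. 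Observing that $\nabla_h\cdot\vec{u}_h$ is piecewise constant, $L^2$-orthogonal to $Q_h$, and of vanishing integral over $\Omega$ (because $\jump{\vec{u}_h}_F$ integrates to zero over every facet $F$, by the barycenter condition in $\vec{X}_h$), it vanishes identically, so $\nabla_h\cdot\vec{e}=0$. Testing the dual equation against $\vec{e}$ and integrating by parts elementwise, I would then arrive at
\begin{equation*}
	\norm{\vec{e}}_0^2 \;=\; \underbrace{\nu\int_\Omega\nabla_h\vec{e}:\nabla_h\vec{\varphi}}_{=:\,E} \;-\; \underbrace{\sum_{T\in\mathcal{T}_h}\nu\int_{\partial T}\vec{e}\cdot(\nabla\vec{\varphi}\,\vec{n})}_{=:\,N_1} \;+\; \underbrace{\sum_{T\in\mathcal{T}_h}\int_{\partial T}(\vec{e}\cdot\vec{n})\,\xi}_{=:\,N_2}.
\end{equation*}

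For the energy term $E$ I would insert $\cri\vec{\varphi}$, which lies in $\vec{V}^0_h$ by \Cref{lem:commutative} because $\vec{\varphi}$ is divergence free, and write $E = \nu\int_\Omega\nabla_h\vec{e}:\nabla_h(\vec{\varphi}-\cri\vec{\varphi}) + a_h(\vec{e},\cri\vec{\varphi})$. The first summand is $\le C\nu\norm{\vec{e}}_{1,h}\norm{\vec{\varphi}-\cri\vec{\varphi}}_{1,h} \le C\nu h\,\norm{\vec{e}}_{1,h}\abs{\vec{\varphi}}_2$ by \Cref{lem:crinterpolation}. For the second, Galerkin orthogonality \eqref{eq:stokeselliptic} yields $a_h(\vec{e},\cri\vec{\varphi}) = a_h(\vec{u},\cri\vec{\varphi}) - l_h(\cri\vec{\varphi})$, and the computation in the proof of \Cref{lem:consistencyerror} rewrites this as $\nu\int_\Omega\bigl[\nabla_h\vec{u}:\nabla_h\cri\vec{\varphi} + \Delta\vec{u}\cdot\bdmih\cri\vec{\varphi}\bigr]$. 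Splitting $\cri\vec{\varphi} = \vec{\varphi} + (\cri\vec{\varphi}-\vec{\varphi})$ and using that $\vec{\varphi}\in\vec{X}$ is conforming, so $\int_\Omega\nabla\vec{u}:\nabla\vec{\varphi} = -\int_\Omega\Delta\vec{u}\cdot\vec{\varphi}$, the $\vec{\varphi}$-part collapses to $\nu\int_\Omega\Delta\vec{u}\cdot(\bdmih\vec{\varphi}-\vec{\varphi}) \le C\nu\abs{\vec{u}}_2 h^2\abs{\vec{\varphi}}_2$ by the second-order $\vec{L}^2$ interpolation estimate for $\bdmih$ stated just above --- this is the one place where the choice $\hdivi=\bdmih$ is essential, since $\rti$ reproduces only constants and would yield order $h$ here --- while the remaining part is controlled by \Cref{lem:errorestimate1} applied with $\vec{w}=\cri\vec{\varphi}-\vec{\varphi}$, giving $\le C\nu h\abs{\vec{u}}_2\norm{\cri\vec{\varphi}-\vec{\varphi}}_{1,h} \le C\nu h^2\abs{\vec{u}}_2\abs{\vec{\varphi}}_2$ by \Cref{lem:crinterpolation}. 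Altogether $\abs{E}\le C\nu h\bigl(\norm{\vec{e}}_{1,h}+h\abs{\vec{u}}_2\bigr)\abs{\vec{\varphi}}_2$.

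The nonconforming facet terms $N_1,N_2$ I would handle with the classical Crouzeix--Raviart device. Since $\vec{u}$ is $\vec{H}^1$-conforming and $\nabla\vec{\varphi}$, $\xi$ have single-valued traces across interior facets, $N_1$ and $N_2$ reduce, up to signs and with the usual convention on boundary facets, to sums over $F\in\Faces$ of $\nu\int_F\jump{\vec{u}_h}_F\cdot(\nabla\vec{\varphi}\,\vec{n}_F)$ and $\int_F(\jump{\vec{u}_h}_F\cdot\vec{n}_F)\,\xi$, respectively; moreover $\int_F\jump{\vec{u}_h}_F = 0$ since $\jump{\vec{u}_h}_F$ is affine on $F$ and vanishes at the barycenter $\vec{x}_F$. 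Subtracting the facet means of $\nabla\vec{\varphi}\,\vec{n}_F$ and of $\xi$, and invoking the anisotropic trace and $\vec{L}^2$-approximation estimates from \cite{ApelNicaiseSchoberl2001,ApelNicaiseSchoberl2001:2} (which hold under the maximum angle condition), I would obtain $\abs{N_1}\le C\nu h\,\norm{\vec{e}}_{1,h}\abs{\vec{\varphi}}_2$ and $\abs{N_2}\le Ch\,\norm{\vec{e}}_{1,h}\abs{\xi}_1$. Finally I would collect the three estimates, divide by $\norm{\vec{e}}_0$, and substitute the regularity bound $\nu\abs{\vec{\varphi}}_2+\abs{\xi}_1\le C\norm{\vec{e}}_0$ together with the energy estimate $\norm{\vec{e}}_{1,h}\le 2\inf_{\vec{v}_h\in\vec{V}^0_h}\norm{\vec{u}-\vec{v}_h}_{1,h}+Ch\abs{\vec{u}}_2\le Ch\abs{\vec{u}}_2$ (from \eqref{eq:errorestimateu} and \Cref{lem:crinterpolation}); every term is then $O\bigl(h^2\abs{\vec{u}}_2\norm{\vec{e}}_0\bigr)$, which gives the claim. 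I expect the main obstacle to be exactly this last point on stretched elements: the trace inequality for $\jump{\vec{u}_h}_F$ and the $\vec{L}^2$-approximation of $\nabla\vec{\varphi}$ and $\xi$ by facet means must be carried out with constants depending only on $\bar{\phi}$ (not on the aspect ratio); granting those, the remainder is the shape-regular proof with \Cref{lem:rtinterpolation} in place of the usual interpolation error bounds.
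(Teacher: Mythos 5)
Your duality argument is exactly the proof the paper intends: \Cref{th:bdm_l2_estimate} is established by reference to the Aubin--Nitsche argument of \cite[Section 4]{BrenneckeLinkeMerdonSchoberl2015}, with the quadratic $\vec{L}^2$ interpolation bound for $\bdmih$ on maximum-angle meshes as the only new ingredient, and you have correctly located both that ingredient and the reason the choice $\hdivi=\rti$ fails in the treatment of the term $\nu\int_\Omega\Delta\vec{u}\cdot(\bdmih\vec{\varphi}-\vec{\varphi})$. The one obstacle you flag --- bounding the nonconforming facet terms $N_1$, $N_2$ with constants depending only on $\bar{\phi}$ --- is already settled by the Raviart--Thomas device used in the proof of \Cref{lem:errorestimate1} and by the anisotropic consistency estimates of \cite{ApelNicaiseSchoberl2001:2}, so no genuine gap remains.
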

\begin{proof}
	The proof is entirely analogous to the proof in \cite[Section 4]{BrenneckeLinkeMerdonSchoberl2015}, now using the above interpolation error estimate.
\end{proof}

\begin{remark}
	The proof of an estimate as in \Cref{th:bdm_l2_estimate} for $\hdivi = \rti$ is not possible using this approach, because of the weaker interpolation properties of the operator. Due to observations of the $L^2$ error in the numerical experiments in \Cref{sec:numerical_examples}, we conjecture that such an estimate, which was proven in \cite{LinkeMerdonWollner2017} for the shape regular case, holds true also for the anisotropic case.
\end{remark}

\begin{remark}
	For structured meshes as the ones pictured in \Cref{fig:uexact_mesh_2d}, the estimates \eqref{eq:errorestimateu2} -- \eqref{eq:errorestimatep2} simplify to a certain degree, as $\vec{l}_{T,j} = \pm \vec{e}_j$, where $\vec{e}_j$ are the Cartesian unit vectors, and the norms of the directional derivatives can be written as the regular partial derivatives, 
	\begin{equation*}
		\norm{\pdv{\vec{v}}{\vec{l}_{T,i}}}_{0,T} = \norm{\pdv{\vec{v}}{x_i}}_{0,T}. 
	\end{equation*}
	For a uniform structured mesh, the estimate further simplifies due to $h_{T,j} = h_j$ for all $T\in \mathcal{T}_h$, so that we can write e.g. for \eqref{eq:errorestimateu2}
	\begin{equation*}
		\norm{\vec{u}-\vec{u}_h}_{1,h} \leq 2 \inf_{\vec{v}_h\in\vec{V}^0_h} \norm{\vec{u} - \vec{v}_h}_{1,h} + C \left(h\norm{\Delta\vec{u}}_0 + \sum_{i=1}^d \sum_{j=1}^d h_{j} \norm{\pdv{\nabla u_i}{x_j}}_{0}\right).
	\end{equation*}
\end{remark}

\section{Numerical results}\label{sec:numerical_examples}
We now numerically examine the convergence of the modified Crouzeix--Raviart method with special attention to the behavior in anisotropic settings. 

\subsection{2D example}
We choose an exact solution $(\vec{u},p)$ of the Stokes system on the unit square $\Omega = (0,1)^2$, which is given by
\begin{align*}
	&\vec{u}(\vec{x}) = 	\left(\pdv{\xi}{x_2}, -\pdv{\xi}{x_1}\right), &&p(\vec{x}) = \exp(-\frac{x_1}{\epsilon}) - C(\epsilon),
\end{align*}
where the stream function is defined as $\xi(\vec{x}) = x_1^2(1-x_1)^2x_2^2(1-x_2)^2\exp(-\frac{x_1}{\epsilon})$, and $C(\epsilon)$ is a constant necessary to get vanishing mean pressure. For these functions it holds $(\vec{u}, p) \in \vec{H}^2(\Omega) \times L^2_0(\Omega)$ and $\Delta \vec{u} \in \vec{L}^2(\Omega)$, as required for our theoretical results. 

\begin{figure}[t]
	\centering
	\includegraphics[scale=1]{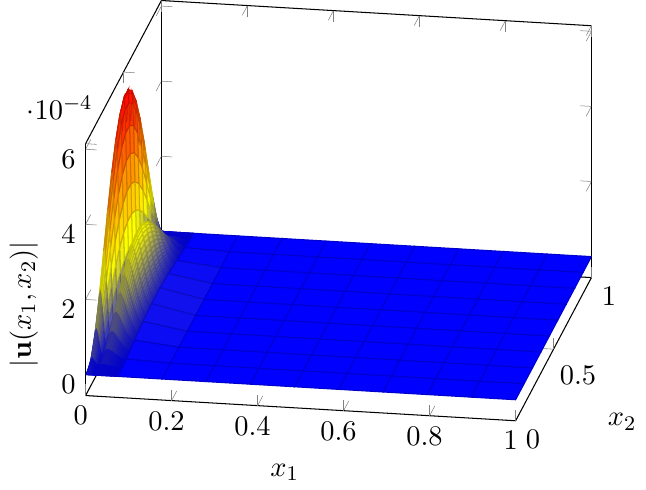}
	\hspace{\fill}
	\includegraphics[scale=1]{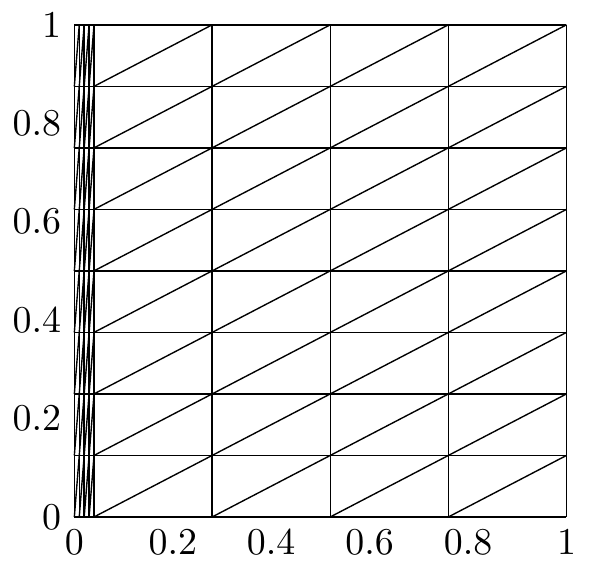}
	\caption{Exact velocity and example mesh for $\epsilon=0.01$, $N=2^3$ used in the calculations}
	\label{fig:uexact_mesh_2d}
\end{figure}
\Cref{fig:uexact_mesh_2d} shows a plot of the magnitude of the exact velocity for the parameter value $\epsilon = 0.01$, where the exponential boundary layer near $x_1=0$ is clearly visible. The layer has a width of $\mathcal{O}(\epsilon)$ and is also present in the pressure solution. The used meshes are of Shishkin type, see the example in \Cref{fig:uexact_mesh_2d}. For a parameter $N\geq 2$ they are constructed by choosing a transition point parameter $\tau \in (0,1)$ and generating a grid of points $(x_1^i, x_2^j)$, 
\begin{align*}
	x_1^i &= \begin{cases}
				i\frac{2\tau}{N}, & 0\leq i\leq \frac{N}{2}, i\in \N, \\
				\tau + \left( i - \frac{N}{2}\right)\frac{2(1-\tau)}{N}, & \frac{N}{2} < i \leq N, i\in \N,
			\end{cases} \\
	x_2^j &= \frac{j}{N}, \quad 0\leq j\leq N, j \in \N.
\end{align*}
Connecting each point to the nearest other grid points by edges, we get a rectangular mesh, then subdividing each rectangle into two triangles leaves us with the desired triangular mesh. 
By this scheme we get a triangulation of $\Omega$ with $n=2 N^2$ elements and an aspect ratio of $\sigma = \frac{\sqrt{1+4\tau^2}}{1+2\tau-\sqrt{1+4\tau^2}}$, see \Cref{fig:uexact_mesh_2d}. The transition point parameter is chosen as $\tau = \min\{\frac{1}{2}, 3\epsilon \abs{\ln(\epsilon)}\}$, which means that approximately three times the boundary layer width are covered by the anisotropic elements. 

\begin{figure}[t]
	\centering
	\includegraphics[scale=1]{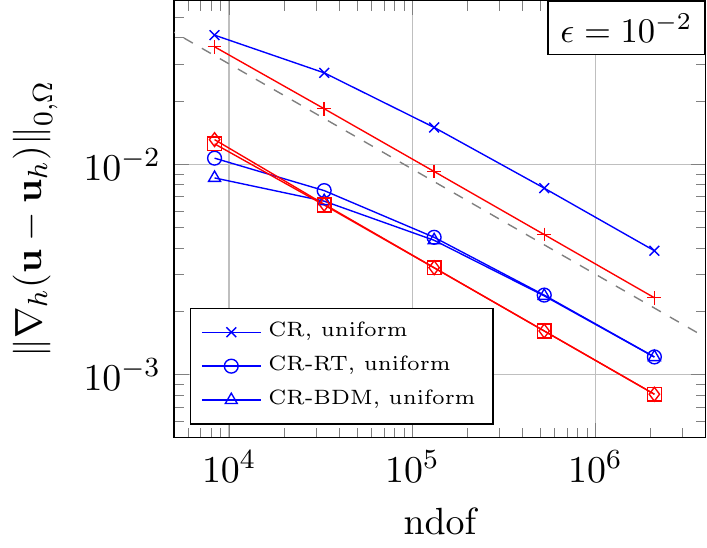}
		\hspace{\fill}
		\includegraphics[scale=1]{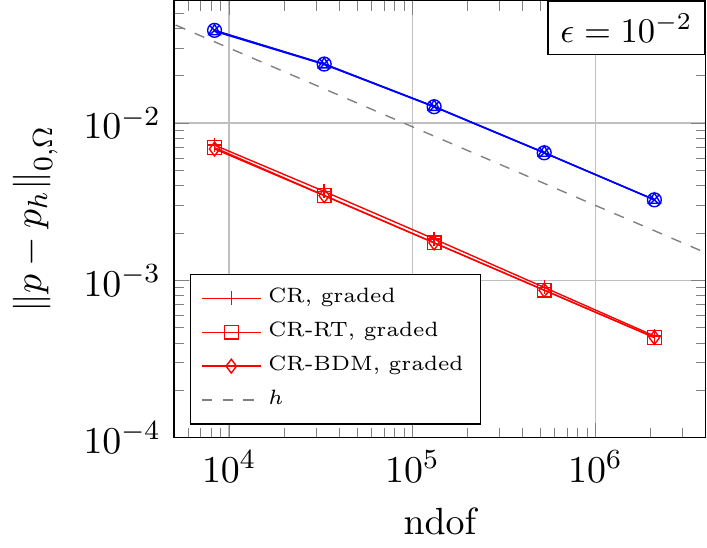}
		\vspace{1em}
		\includegraphics[scale=1]{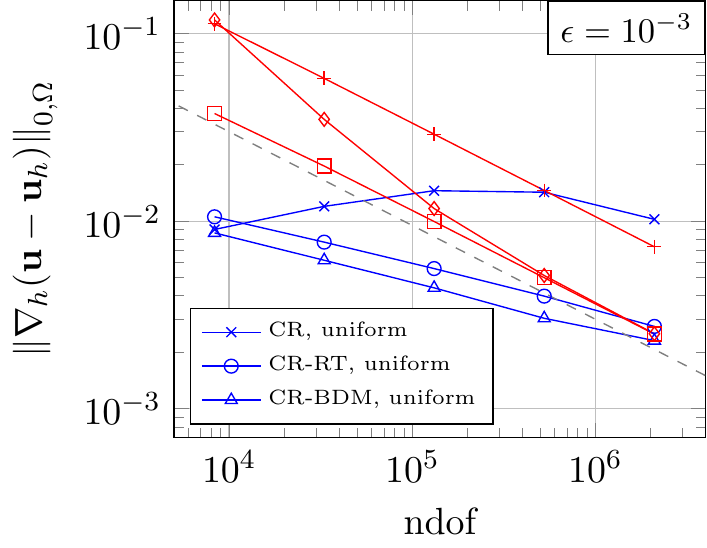}
		\hspace{\fill}
		\includegraphics[scale=1]{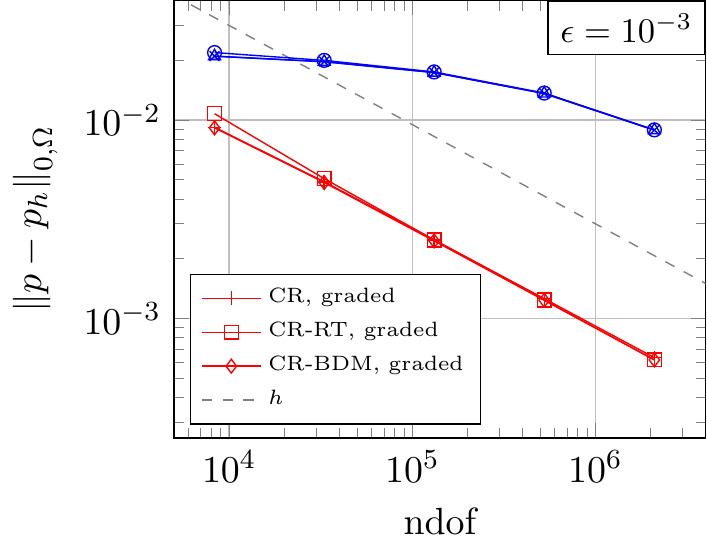}
	\caption{Convergence plots of the discrete velocity and pressure solutions for two values of the parameter $\epsilon$, the dashed lines are the same in all plots}
	\label{fig:results_CNK}
\end{figure}
We performed calculations with parameter values $\nu = 1$, $\epsilon \in \{10^{-2},10^{-3}\}$, both with graded and uniform meshes and the standard Crouzeix--Raviart and the modified method from this paper. In the results shown in \Cref{fig:results_CNK}, two numerical effects are visible. 

The first is due to the anisotropic mesh grading and occurs for both the standard and modified Crouzeix--Raviart methods. Initially when using uniform meshes, the velocity error shows suboptimal convergence rates, until the elements properly resolve the boundary layer, when we observe the theoretical rate of convergence. For anisotropically graded meshes the optimal convergence rate manifests immediately. Once the optimal rate is reached on both types of meshes, the graded mesh produces a lower absolute error. 

The second effect is a result of the pressure-robustness of the modified methods, which in this example leads to significantly reduced errors. We also see, that both modifications $\rti$ and $\bdmih$ lead to similar results.

\subsection{3D example with a singular edge}\label{subsec:example_3d}
We now get to a more relevant three dimensional example, where the beneficial effect of anisotropic mesh grading becomes obvious. Consider the inhomogeneous Stokes problem, i.e. the first two equations of problem \eqref{eq:stokescont}, with the boundary condition $\vec{u} = \vec{g}$ on $\partial \Omega$, on the domain
\begin{equation*}
	\Omega = \{(r\cos(\phi), r\sin(\phi), z) \in \R^3: 0<r<1, 0<\phi<\omega, 0<z<1\}, 
\end{equation*}
where $\omega = \frac{3\pi}{2}$. 

\begin{figure}[t]
	\centering
	\includegraphics[scale=1]{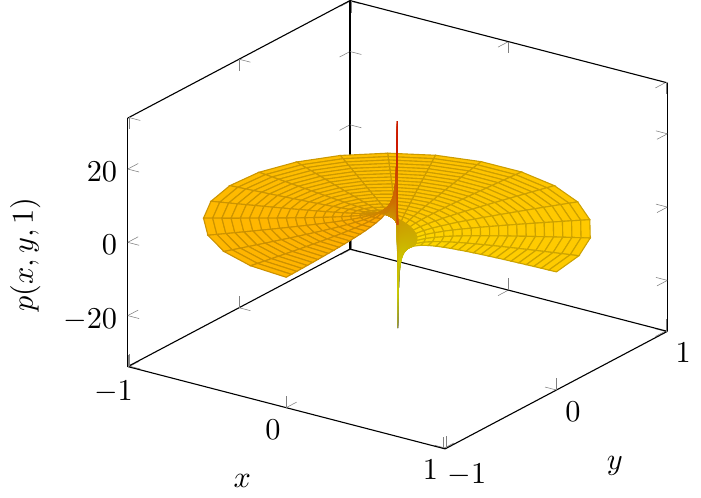}
	\hfill
	\includegraphics[scale=1]{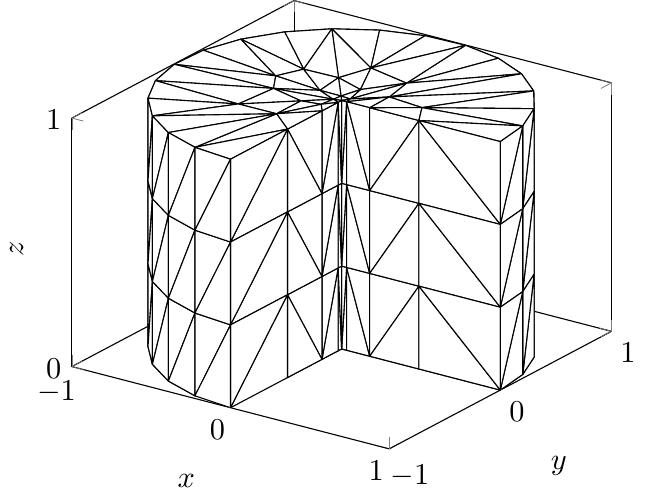}
	\caption{Exact pressure $p(x,y,1)$ with singularity at $z$-axis, anisotropically graded mesh}
	\label{fig:mesh_3d}
\end{figure}
For the convergence calculations we use, as before, the method of manufactured solutions, with exact velocity and exact pressure defined by
\begin{align*}
	\vec{u} &= 	\begin{pmatrix}
					z r^\lambda [-\lambda\sin(\phi)\cos(\lambda(\omega-\phi)+\phi)+\lambda\sin(\omega-\phi)\cos(\lambda\phi-\phi)+\sin(\lambda(\omega-\phi))] \\
					z r^\lambda [\sin(\lambda\phi)-\lambda\sin(\phi)\sin(\lambda(\omega-\phi)+\phi)-\lambda\sin(\omega-\phi)\sin(\lambda\phi-\phi)] \\
					r^{\nicefrac{2}{3}}\sin\left(\frac{2}{3}\phi\right)
				\end{pmatrix}, \\
	p &= 2\lambda z r^{\lambda-1} [\sin((\lambda-1)\phi+\omega)+\sin((\lambda-1)\phi-\lambda\omega)],
\end{align*}
where the parameter $\lambda$ is the smallest positive solution of $\sin(\omega \lambda) = \lambda$, i.e. $\lambda \approx 0.54448$. The singular nature of the exact pressure along the edge at $r=0$ is illustrated in \Cref{fig:mesh_3d}.
The data functions are obtained by $\vec{f} = -\nu\Delta\vec{u}+\nabla p$ and $\vec{g} = \vec{u}|_{\partial \Omega}$. Elementary calculations show that $\nabla \cdot \vec{u} = 0$ and $\int_\Omega p = 0$.

This example was examined in \cite[Section 4]{ApelNicaiseSchoberl2001:2} for the standard Crouzeix--Raviart method, where it illustrated the result that anisotropic mesh grading towards the singular edge leads to an optimal convergence rate, while with uniform meshes the convergence rate in non-convex settings deteriorates because of the low regularity of the solution. 
Due to this low regularity, $(\vec{u},p) \notin \vec{H}^2(\Omega) \times H^1(\Omega)$, $\Delta\vec{u}\in \vec{L}^q(\Omega)$, $1\leq q < \frac{2}{2-\lambda}$, and the assumed inhomogeneous boundary conditions, this example leaves the theoretical framework of our prior analysis. 
Although this is the case and no thorough analysis has been done yet, the numerical results show that the anisotropic grading works with the modified method, and the convergence rate is optimal, just as with the standard method. 
This gives reason to investigate this situation in future research. 
Note for instance, that for $\nu = 1$ we have $\vec{f} = (0,0,\partial_z p)^T \in \vec{L}^2(\Omega)$, thus we can deduce using \cite[Lemma 3.1]{LinkeMerdonNeilan2019} that $\P(-\Delta \vec{u}) \in \vec{L}^2(\Omega)$ even for $\nu \neq 1$, in which case the first to components of $\vec{f}$ are not in $L^2(\Omega)$ anymore. Here $\P(\cdot)$ denotes the Helmholtz-Hodge projector, see \cite[Section 3]{LinkeMerdonNeilan2019}. 
The property $\P(-\Delta \vec{u})\in \vec{L}^2(\Omega)$ is the required regularity of the Laplacian of the velocity solution for the error analysis in \cite{LinkeMerdonNeilan2019}, which proves pressure-robust quasi-optimal estimates in low-regularity settings for the Stokes problem.

\Cref{fig:mesh_3d} shows the type of graded mesh used for this example, which satisfies the maximum angle condition, but not the regular vertex property. For a two dimensional domain $B=\{(r\cos(\phi),r\sin(\phi))\in \R^2: 0<r<1, 0<\phi<\omega\}$, a quasi-uniform mesh is created and graded towards the origin. The grading is done so that for a mesh size parameter $h$ and every triangle $T$ with diameter $h_T$ the relation
\begin{equation*}
	h_T \sim 	\begin{cases}
					h^{\nicefrac{1}{\mu}}, &\text{if } r_T = 0, \\
					h r_T^{1-\mu}, &\text{else},
				\end{cases}
\end{equation*}
is satisfied, where $r_T=\inf_{\vec{x}\in T}\{\dist(\vec{x}, \mathbf{0})\}$ and $\mu \in (0,1]$ is a grading parameter. The resulting, no longer quasi-uniform but still isotropic, mesh is then extended into the third dimension with a uniform mesh size $h_3 \sim h$. This pentahedral mesh is subsequently turned into a tetrahedral mesh by subdividing each prism into three tetrahedra, as shown in \Cref{fig:Pentahedron}. The procedure yields a mesh where the number of elements satisfies $N_{\text{elem}} \sim h^{-3}$ and is described in more detail in e.g. \cite{Apel1999,ApelNicaiseSchoberl2001,ApelNicaiseSchoberl2001:2}.

\begin{figure}[t]
	\centering
	\includegraphics[scale=1]{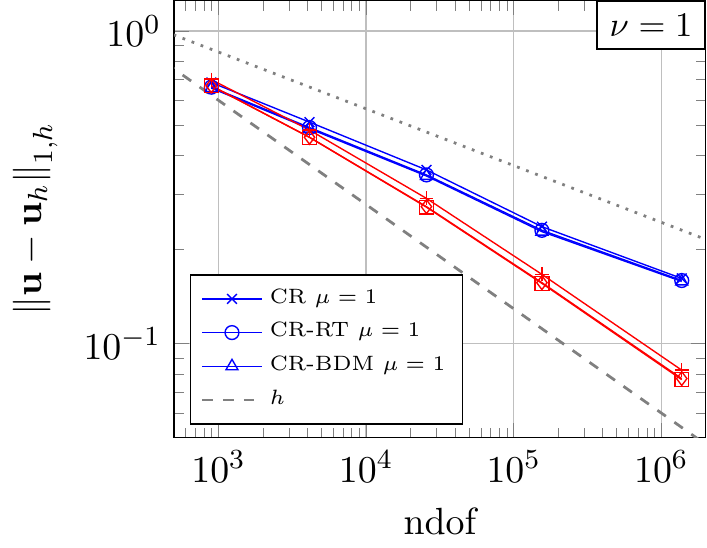}
	\hfill
	\includegraphics[scale=1]{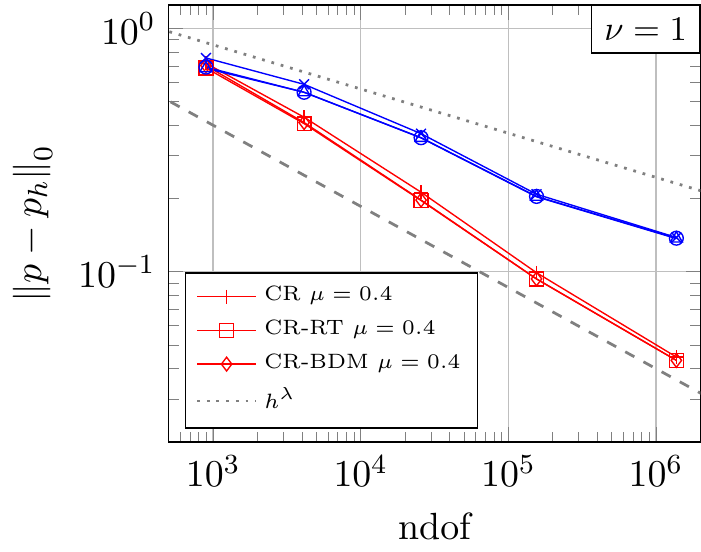}
	\vspace{1em}
	\includegraphics[scale=1]{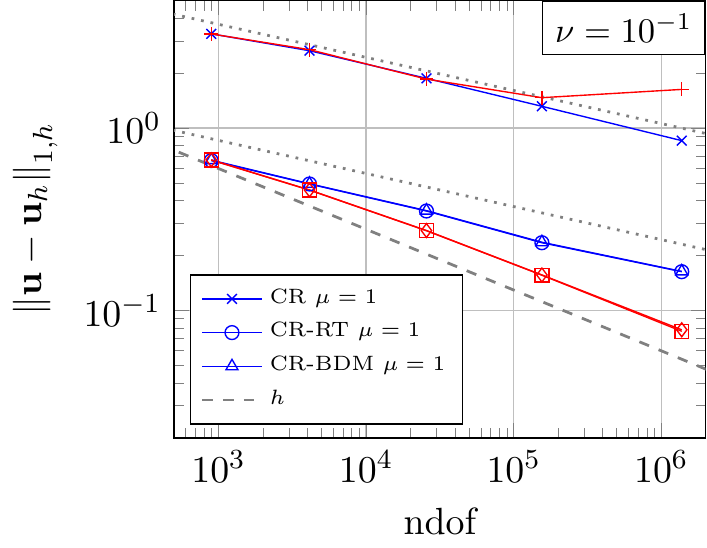}
		\hfill
		\includegraphics[scale=1]{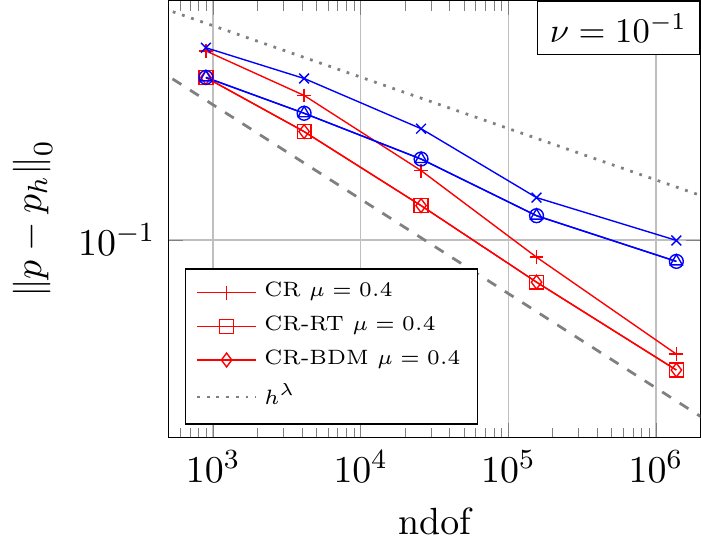}
	\caption{Energy and $L^2$ error of the discrete solution obtained with the standard Crouzeix--Raviart and modified Crouzeix--Raviart method}
	\label{fig:error_example2}
\end{figure}
The calculations were done with parameter values $\nu \in \{10^{-1},1\}$ and $\mu \in \{0.4, 1\}$. 
The results, see \Cref{fig:error_example2}, on the one hand corroborate the results from \cite{ApelNicaiseSchoberl2001} and on the other hand show that the recovery of the optimal convergence rates is also possible for the pressure-robust modified Crouzeix--Raviart method. 

From the results with the viscosity set to $\nu = 10^{-1}$, see \Cref{fig:error_example2}, it is clear that the modified method shows the pressure-robustness property also in these low regularity settings with anisotropic mesh grading, as the errors of the velocity are not influenced by the value of $\nu$.

\begin{remark}
	Due to a factor $r^{\lambda-2}$ arising in the first two components of the data function $\vec{f}$ for parameters $\nu \neq 1$, the numerical quadrature of the right hand side of the variational formulation has to be very accurate in order to achieve the presented numerical results.
\end{remark}

\printbibliography

\end{document}